\definecolor{refkey}{gray}{.5}   
\definecolor{labelkey}{gray}{.5} 
\DeclareMathOperator{\interior}{Int}
\newcommand{\RN}[1]{%
  \textup{\uppercase\expandafter{\romannumeral#1}}%
}
\def\bsigma{{\boldsymbol{\sigma}}}
\theoremstyle{plain}
\newtheorem{theorem}{Theorem}[section]
\newtheorem{lemma}[theorem]{Lemma}
\newtheorem{proposition}[theorem]{Proposition}
\newtheorem{hypothesis}[theorem]{Hypothesis}
\theoremstyle{definition}
\newtheorem{definition}[theorem]{Definition}
\newtheorem{remark}[theorem]{Remark}
\numberwithin{equation}{section}
\newcommand{\Mod}[1]{\ (\mathrm{mod}\ #1)}
\def\DS{\displaystyle}
\def\EXP{\mathbb{E}}
\def\brdelta{{\bar\delta}}
\def\brsigma{{\hat\sigma}}
\def\ba{{\mathbf{a}}}
\def\bE{{\mathbf{E}}}
\def\bsigma{{\boldsymbol{\sigma}}}
\newcommand{\I}{\mathcal{I}}
\newcommand{\R}{\mathbb{R}}
\newcommand{\Z}{\mathbb{Z}}
\newcommand{\T}{\mathbb{T}}
\newcommand{\eps}{\varepsilon}
\renewcommand{\phi}{\varphi}
\DeclareMathOperator{\lip}{Lip}
\DeclareMathOperator{\dist}{dist}
\DeclareMathOperator{\var}{Var}
\def\DS{\displaystyle}
\def\ba{{\mathbf{a}}}
\def\bE{{\mathbf{E}}}
\def\bsigma{{\boldsymbol{\sigma}}}
\def\DS{\displaystyle}
\def\EXP{\mathbb{E}}
\def\brW{{\bar W}}
\def\brr{{\bar r}}
\def\brT{{\bar T}}
\def\brP{{\bar P}}
\def\brG{{\bar G}}
\def\br\tau{{\bar tau}}
\def\cN{{\mathcal{N}}}
\def\cP{{\mathcal{P}}}
\def\cS{{\mathcal{S}}}
\def\cZ{{\mathcal{Z}}}
\def\fa{{\mathfrak{a}}}
\def\fa{{\mathfrak{a}}}
\def\RmII{I\!\!I}
\def\tG{{\tilde G}}
\def\mes{{\mathrm{mes}}}
\begin{document}

\author{Dmitry Dolgopyat}
\author{Davit Karagulyan}

\title[Dynamical random walk on the integers with a drift]{Dynamical random walk on the integers with a drift}
\thanks{We would like to thank Carlangelo Liverani for discussions on the manuscript.\\
The first author is partially supported by the NSF. The second author is supported by the Knut
and Alice Wallenberg foundation (KAW). The work was done during the second author’s visit to
the University of Maryland, and he thanks the mathematics department for the excellent working
conditions.
}
\maketitle{}

\begin{abstract}

In this note we study dynamical random walks (DRW) with internal states. We consider a particle which performs a dynamical random walk on $\mathbb{Z}$ and whose local dynamics is given by expanding maps. 
We provide sufficient conditions for the position of the particle $z_n$ to satisfy the Central Limit Theorem.

\end{abstract}

\tableofcontents

\section{Introduction} 
\subsection{Motivation.}
Understanding transport in an inhomogeneous media is one of the classical problems in mathematical physics.
The motion in homogeneous media is well understood and is described by the heat equation whose fundamental solution is given by the transition density of the Brownian Motion. The situation in the inhomogenuous case
is more complicated.

One of the simplest models of inhomogenuous transport is given by random walks in random environment. In this 
model the particle moves on the lattice $\Z^d$ so that if the particle is in position $z$ it moves to $z+v$ for $v$ in a finite set
$\Lambda$ with probability $p(z, v)$ where the vectors $\{p(z, \cdot)\}_{z\in\Z^d}$ are iid.
This model is completely understood in dimension 1 while in higher dimensions only partial results are available.
In the one dimensional setting the recurrent motion leads to Sinai behavior \cite{S82}, where the particle at time $t$
is typically at the distance $O\left(\ln^2 t\right)$ from the origin. In the transient case a wide range of behaviors
is possible \cite{KKS}. In particular the transient walk can have either positive or zero speed
(\cite{Sol}). 
 In the case of positive 
speed the fluctuations around the linear motion could be either Gaussian or described by stable laws of index
$1\leq s<2.$ In the case of zero speed the limit distributions are Mittag--Leffler (the results of \cite{KKS, S82, Sol} 
pertain to the nearest neighbor walks, we refer the readers to \cite{BG00, BG08, DG13, DG20, Go08} 
for the extensions to the walks with bounded jumps). In contrast if the dimension is greater than 1, then the walk
is expected to satisfy the Central Limit Theorem (at least, if the dimension is high enough). However, so far it has
been proven only for systems satisfying some additional assumptions such as reversibility (\cite{PV, Bi11}),
a sufficiently strong drift (see \cite{Szn01, Szn02, BDR} and references wherein) or a perturbative regime
(\cite{SznZ}). 

The progress in understanding of random walks in random environment naturally leads to a question about
extending the results proven for that model to a more realistic systems. One particularly interesting question
is to understand a deterministic motion in random environment. In particular, a number of papers concern 
Lorentz gas in random environment--a system, where a particle moves freely on a plane colliding elastically
with a random array of convex scatterers (\cite{AL18, CL10, DL21, LT11}).
While the works above  establish  recurrence and the Law of Large Numbers for different models of random
Lorentz gas, the limit theorems are currently unknown. In order to obtain a more tractable model of 
deterministic motion in random environment, in \cite{AL18} the authors proposed a model of Deterministic Walks 
in Random Environment (DWRE). 
By this one means a map $F$ defined on $M\times \Z^d$
where $M$ is the internal state of the walker.
Namely,  suppose that for each $n\in \Z^d$ we have a map $T_n: M\to M$ and a partition
$\DS M=\bigcup_v W_{v, n}$ ({\em gate partition}) where $v\in \{0, \pm e_1,\dots \pm e_d\}.$  Let
\begin{equation}
\label{DRW} F(x, n)=\Big(T_n x, n+\sum_v 1_{W_{v,n}} v\Big). 
\end{equation}
Thus if the particle is at site $n$ then its internal state changes according to $T_n$, while the change of the location 
is prescribed by the gates.

One is then interested in statistical properties of $z_n(x)=\pi_{\mathbb{Z}^d}(F^n(x,0))$. The randomness in the system comes from the random choice of the initial internal state $x_0\in M$.

In \cite{AL18} the authors provide conditions under which $z_n$ satisfies the law of large numbers. They
 show that their conditions are satisfied for a dynamical random walk whose local dynamics is given by 
a sufficiently expanding interval map, such as 
$\beta$ transformations with large $\beta.$

{ \cite{AL18} show that the random Lorenz gas fits into the framework of DWRE. Moroeover the class}
 of DWRE contains several classical examples of random motion.
As an example, consider the following system:
let $d=1$, $M=\T^{1}$, $T_n(x)=2x \Mod 1$ and $W_{n,-1}=[0, \frac{1}{2})$, $W_{n,1}=[\frac{1}{2}, 1)$ for all $n \in \mathbb{Z}$. One can see that  if we choose the initial internal state uniformly on $\mathbb{T}$ then
the DWRE defined this way is equivalent to the 
{ simple symmetric random walk on $\Z$.}
More generally it is shown in \cite{AL18}
that DWRE with { linear}
expanding local dynamics and Markov gates can model random walks in random 
environment (RWRE). In particular, all types of behavior  observed in RWRE, appear also
in DWRE, so the particle can be transient with zero speed (\cite{Sol})
 or it can exhibit Sinai 
behavior (\cite{S82}) where after $n$ steps the particle is located at the distance of order $\ln^2 n$
from the origin. However, Markov condition on the gates is pretty restrictive and so it is of interest
to develop tools to handle non Markovian dynamics.

The goal of the present article is to develop a robust method for proving CLT for one dimensional systems with
strong drift 
(note that some assumptions on the system are necessary to get the CLT due to the non-Gaussian examples
of \cite{AL18}). Our approach has two types of ingredients: probabilistic and dynamical. The dynamical ingredient 
is the CLT theory for the composition of ladder maps $G_n$. $G_n$ describes the internal state of the particle 
starting at the level $n$
when it arrives at level $n+1$ for the first time.
This part relies on the theory
of sequential dynamical systems. The probabilistic ingredients consist of renewal theory which allows to pass from
the CLT for hitting times to the CLT for the particle position and on the CLT for the quenched drift, which uses the
central limit theory for weakly dependent random variables.

In order to describe the main ideas of our approach in the simplest possible settings we
present two models. Model A is strongly ballistic. Namely among any three steps, at least two are to the right. 
In this case the dynamical part uses the CLT for bounded observables of sequential expanding maps
available in the literature (\cite{CR07}). 
Model B is more realistic, since the particle could move arbitrary far to the left, albeit with a small probability.
In this case the dynamical part needs to be extended as well leading to more complicated arguments.

In a future work we plan to apply our method to Lorentz gas in the presence of random field. In this case
the local dynamics and, hence, the ladder maps $G_n$ are hyperbolic rather than expanding which requires
a significant improvement of the existing dynamical results. Therefore this model will be a subject of a separate
paper.

We note that our work is the first example, where the CLT is proven for an open class 
of deterministic systems
in random environment as time tends to infinity
({the results of \cite{AL18} could be used to obtain examples
of DRWE satisfying the  CLT, however, the parameters need to be tuned very carefully to obtain the equivalence 
with RWRE)}.
Before describing precisely our models (which will be done
in \S \ref{SSRes}) we 
{ mention the previous work where the CLT is obtained for the deterministic motion in random environment. We note that in the models described below the environment
depends on an additional parameter $\eps$ and the time scales
as some power of $1/\eps$, while in our work the environment is fixed and time tends to infinity.}

 The first model deals with a
particle moving in a dilute random media (so called Boltzmann-Grad regime) where the time tends to zero and the
sizes of the scatters go to zero at the same time. A selection of papers on this subject includes
\cite{BBS, Sp80, Sp91, LuT}. While this topic is of great physical relevance, it is beyond the scope of
the present work.
We just mention that since the interactions happen rarely, it is easier to  
make use of the mixing properties of the environment. 

{ Another problem dealing with  deterministic motion in the random media is equations with rapidly oscillating coefficients.
The study of the equation $\dot{x}=v(x, \xi_{t/\eps})$, where $\xi_t$ is a rapidly mixing random process 
and $\varepsilon>0$ is a small parameter, goes back
to the work of Khasminskii (\cite{Kh1, Kh2}). Note that this system is non autonomous but it can be converted to an 
autonomous form by rewriting it as
$$ \dot{x}=v(x, \xi_{s/\eps}), \quad \dot{s}=1.$$
{ Khasminskii shows that the solutions of this equations are close to the solutions
of the averaged equation $\dot{\bar x}=\bar v(\bar x)$ where $\bar v(\bar x)=\EXP(v(\bar x, \xi))$
and obtains the CLT for the fluctuations.}
More generally, the results similar to \cite{Kh1, Kh2} can be obtained for the systems in the form
$\dot{x}=v(x/\eps)$ where $v:\R^d\to\R^d$ is a rapidly mixing process and $v_{d}>\delta$ for some $\delta>0.$
(In this case $x_d$ plays the role of time), see \cite{KP79}. 
Similar results are also available for the the second order equations with rapidly oscillating coefficients,
see \cite{KP81, KR1, KR2, DK09} and references wherein.}

A third subject is the billiard models where the reflections
from the boundary are random to model microscopic roughness of the walls
(see \cite{CPSV09, CPSV10a, CPSV10b, Fe07}). 
While limit theorems are available for the random model, the derivation of the same laws from the underlying 
microscopic dynamics remains a challenging open problem.


\subsection{Results.}
\label{SSRes}
We consider a model of DRW { defined by \eqref{DRW}}
where $d=1,$ $M=\T$ and $T_n: \T\to \T$ are smooth uniformly expanding maps.
We will also assume that the particle's coordinate changes every time, thus
$\DS \T=W_{n, -1}\cup W_{n, 1}$. We consider the following models. 

{\em Model A.} 
Let $\brT$ be an expanding map so that there are constants
 $3<\gamma\leq K$ and $K_1>0$, such that
for all $n$ and all $x\in \T$ we have
\begin{equation}
\label{BGeom}
\gamma\leq |\brT_n'(x) |\leq K, \quad \sup_{x \in \mathbb{T}}|\brT_n''(x)|\leq K_1.
\end{equation}

Let $\brW \subset \mathbb{T}$ be a segment such that 
\begin{equation}\label{balist}
\brT^p \brW \cap W=\emptyset \text{ for }p=1, 2.
\end{equation}
We also suppose that for a sufficiently small $\delta_0$ we have that for all $n$
$\|T_n -\brT \|_{C^{2}(\mathbb{T})}\leq \delta_0$ and the Hausdorff distance between $W_{n,-1}$ and $\brW$ is 
smaller than $\delta_0.$

Note that
 the condition $\brT^p \brW \cap W=\emptyset \text{ for }p=1, 2$, is a \emph{ballisticity} condition 
 ensuring that among every three moves of the particle at least two are to the right. Thus the particle moves 
to the right ballistically. Namely $z_n\geq n/3$ and $z_n\geq z_{m}-1$ for $n>m$. 
\\

{\em Model B.} { Let $a$ be a large integer.}
Let $\brT(x)=ax \Mod 1$ and $\brW_{-1} \subset \mathbb{T}$ be a segment with $|\brW_{-1}|<\frac{1}{2}$.
We suppose that for a sufficiently small $\delta_0$ and for all $n\in \mathbb{Z}$,
$\|T_n -\brT \|_{C^{2}(\mathbb{T})}\leq \delta_0$ and the Hausdorff distance between $W_{n,-1}$ and $\brW$ is 
smaller than $\delta_0.$

Thus in this model the local dynamics enjoys a strong expansion, which makes this model similar to the one considered
in \cite{AL18}. (Note that in Model B the local dynamics is smooth while \cite{AL18} consider $\beta$ transformations
which have discontinuity on the circle. We believe that the method of our paper can be extended to maps
with a finite number of discontinuities provided that slope is sufficiently large (depending on the number
of discontinuity points) but to keep the presentation simple we restrict our attention to smooth maps.)
\\

Our first result is the CLT for the hitting time. Namely let $\tau_n({ x}, k)$ be the smallest time $t$
such that $F^t({ x}, k)\in \T\times \{n\}.$ Define the maps
$G_n: \T\to \T$ by 
\begin{equation}\label{induce}
G_n(x)=\pi_{\T} F^{r_{n}(x)}(x, n) \text{ where }
r_n(x)=\tau_{n+1}(x, n)
\end{equation}
and $\pi_{\T}$ denotes the projection on the first coordinate.
{Thus $G_n(x)$ describes the internal state of the walker, which starts at site $n$ with internal state 
$x$, at the first time when the walker reacher site $n+1$.
}
We shall also write $\tau_n(x):=\tau_n(x, 0).$ 
Note that
\begin{equation}\label{hitting}
\tau_n(x)=\sum_{k=0}^{n-1} r_k(G_{k-1}\circ\dots \circ G_0 x). 
\end{equation} 

We say that the DRW satisfies the CLT for hitting times if

\begin{equation}\label{lmt}
\frac{\tau_n-\EXP(\tau_n)}{\sqrt{\var(\tau_n)}}\Rightarrow \mathcal{N}(0,1)
\text{ as }
n \rightarrow \infty
\end{equation}
where $\cN(a, \sigma^2)$ denotes the normal distribution with mean $a$ and standard deviation
$\sigma.$ Here and elsewhere in this article we assume (unless it is explicitly stated otherwise)
that $x$ is uniformly distributed on $\T.$

\begin{theorem}
\label{ThHit}

(a) Given $\bar T$ there exist $\brdelta_0$ such that if 
 $\delta_0\leq \brdelta_0$  then 
the DRW from model A satisfies the CLT for hitting times.

(b) Assume that in model B, $|\brW_{-1}|< \frac{1}{2}$.
Then there exists $a_0 \in \mathbb{N}$ so that for all $a \in \mathbb{N}$, with $a \geq a_0$,  there exists $\bar{\delta}(a)$ so that if $\delta\leq \bar{\delta}(a)$ then the maps $\{G_n\}_{n \in \mathbb{Z}}$ are well defined and 
the DRW satisfies the CLT for hitting times.
\end{theorem}

\begin{remark}
We can assume that $|\brW_{-1}|<\frac{1}{2}$ without loss of generality. Otherwise, we will switch $\brW_{-1}$ with $\brW_{1}$.
\end{remark}

In order to obtain some information about the position of the particle $z_n$ we need to choose the maps and the gates in an iid way. 

Let $\mathcal{E}=\{(T_1,W_1), \dots, (T_m, W_m)\}$ be a collection of maps and gates, so that any sequence $(T_n,W_{n,-1})_{n \in \mathbb{Z}}$, with $(T_n, W_n) \in { \mathcal{E}}$, $\forall n \in \mathbb{Z}$, satisfies the conditions of Theorem \ref{ThHit} (that is either for all $n$ the assumptions of model A are satisfied, 
or for all $n$ the assumptions of model  B are satisfied).

\begin{theorem}\label{main}
Take $\bar{\delta}$ so small that every realization $\{(T_n, W_n)\}_{n \in \mathbb{Z}}$ from the collection $\mathcal{E}$ satisfies the conditions of Theorem \ref{ThHit}. 

  (a) {\sc (Quenched CLT)} There are constants $\ba,\bsigma>0$ such that for almost all iid realizations
  of the pairs $(T_n,W_n)$ there are constants $b_n=b_n(\omega)$ such that if 
  $x$ is uniformly distributed on $\T$ 
  then
  $$
\frac{z_n-b_n}{(1/{\ba^{3/2}})\bsigma \sqrt{n}}\Rightarrow \mathcal{N}(0,1)
\text{ as }
n \rightarrow \infty. $$
(b) {\sc (Annealed CLT)}There are constants $v, \sigma$ such that 
if $x$ and $ \{(T_n, W_n)\}$ are independent, $x$ is uniformly distributed on $\T$ 
and $(T_n, W_n)$ are chosen from $\mathcal{E}$ in an iid fashion, then
  $$
\frac{z_n-v n}{(1/{\ba^{3/2}})\sigma \sqrt{n}}\Rightarrow \mathcal{N}(0,1)
\text{ as }
n \rightarrow \infty. $$
\end{theorem}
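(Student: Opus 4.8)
The plan is to deduce the CLT for the position $z_n$ from the CLT for the scale function, Theorem~\ref{ThCLTScale}, by inverting the monotone correspondence between ``where is the particle at time $n$'' and ``when does the particle reach site $m$''. Fix a realization $\omega$, dropped from the notation. Since the particle backtracks by at most one step and every three consecutive steps give net displacement at least $+1$, the local return times satisfy $r_k\le 3$; hence $\cS$ is strictly increasing on $\Z$ with increments $\cS(z+1)-\cS(z)=\EXP_x[r_z(G_{z-1}\circ\cdots\circ G_0x)]\in[1,3]$, and for every integer $m\ge 1$
\[
\{\tau_{m+1}\le n\}\ \subseteq\ \{z_n\ge m\}\ \subseteq\ \{\tau_m\le n\}.
\]
On the range that matters, $m\asymp n$ and $|m-\cZ(n)|\lesssim\sqrt n$, the bounded gap $\cS(m+1)-\cS(m)\le3$ is negligible against $\brsigma_n$, so $\Prob(z_n\ge m)$ has the same asymptotics as $\Prob(\tau_m\le n)$, which is exactly what Theorem~\ref{ThCLTScale} controls once we describe $\cS$ and $\brsigma_n=\sqrt{\var(\tau_{\cZ(n)})}$ there.

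Next I would record the needed asymptotics. Write $\cS(z)=\sum_{k=0}^{z-1}h_k$ with $h_k=\EXP_x[r_k(G_{k-1}\circ\cdots\circ G_0x)]$. Exponential contraction of the expanding cocycle $\{G_k\}$ (the estimate already behind Theorem~\ref{ThHit}) shows $h_k$ is exponentially close to a function $\tilde h_k$ of the environment to the left of $k$; since the environment is iid, the ergodic theorem gives $\cS(z)/z\to\alpha:=\EXP[\tilde h]$ a.s.\ with $\alpha$ non-random, so the speed is $v=1/\alpha$ and $\cZ(n)=vn+o(n)$, and similarly $\var(\tau_m)/m\to D$ a.s.\ with $D\ge0$ non-random and $\brsigma_n=\sqrt{Dv\,n}\,(1+o(1))$. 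The centered sequence $\tilde h_k-\alpha$ decorrelates exponentially, so a maximal inequality bounds its partial sums over any window of length $\lesssim\sqrt n$ situated near $n/\alpha$ by $O_P(n^{1/4})=o(\brsigma_n)$; combined with $n-\cS(\cZ(n))=O(1)$ and the a priori bound $z_n-\cZ(n)=O_P(\sqrt n)$ (immediate from Theorem~\ref{ThCLTScale} and $\cS(z+1)-\cS(z)\ge1$) this yields the local linearity
\[
\cS(z_n)-n=\alpha\,(z_n-\cZ(n))+o_P(\brsigma_n).
\]
I expect the genuine obstacle to be the non-degeneracy $D>0$, equivalently $\brsigma_n\to\infty$ --- this is exactly what the hypothesis of Theorem~\ref{ThHit}(a$'$) demands, and must be verified in the iid setting; it amounts to excluding an $L^2$-coboundary representation of the return-time cocycle and has to be extracted from the same mechanism that powers Theorem~\ref{ThHit}.

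For part~(a): Theorem~\ref{ThCLTScale}, valid for almost every realization by Corollary~\ref{CrHit}, gives $(\cS(z_n)-n)/\brsigma_n\Rightarrow\cN(0,1)$; feeding this into the local linearity with $\alpha=1/v$ and $\brsigma_n=\sqrt{Dv\,n}\,(1+o(1))$ yields
\[
\frac{z_n-\cZ_\omega(n)}{\sqrt{Dv^{3}\,n}}\ \Rightarrow\ \cN(0,1),
\]
which is the quenched CLT with centering $b_n(\omega)=\cZ_\omega(n)$ and the non-random constant $\bsigma=\sqrt{Dv^{3}}$ (by ergodicity $v$ and $D$ do not depend on $\omega$).

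For part~(b) one further integrates over $\omega$. Decompose $z_n-vn=(z_n-b_n(\omega))+(b_n(\omega)-vn)$. Conditionally on $\omega$ the first summand obeys the part~(a) CLT, Gaussian with the $\omega$-independent variance $\bsigma^2n$; the second is $b_n(\omega)-vn=\cZ_\omega(n)-vn=-v\sum_{k<\cZ_\omega(n)}(\tilde h_k-\alpha)+O(1)$, which (its number of terms being $\sim vn$) is, by the CLT for the stationary, exponentially decorrelating sequence $\{\tilde h_k\}$, asymptotically Gaussian with variance $v^{3}D'n$ for the appropriate coefficient $D'\ge0$ --- precisely the spread between the deterministic centering $vn$ and the quenched centering $\cZ_\omega(n)=vn+\Theta(\sqrt n)$. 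Since the conditional characteristic function of the first summand converges, for a.e.\ $\omega$, to the \emph{$\omega$-independent} limit $e^{-t^{2}\bsigma^{2}/2}$ and is bounded, dominated convergence shows the two summands are asymptotically independent, whence $\dfrac{z_n-vn}{\sqrt n}\Rightarrow\cN(0,\bsigma^{2}+v^{3}D')$, i.e.\ the annealed CLT with $\sigma=\sqrt{\bsigma^{2}+v^{3}D'}\ge\bsigma$. Apart from the non-degeneracy already flagged, the only genuinely new ingredient is this CLT for $\{\tilde h_k\}$, which again follows from the decorrelation estimates for the cocycle $\{G_k\}$.
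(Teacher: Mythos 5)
Your proof is correct and follows essentially the same route as the paper: take $b_n=\cZ(n)$, transfer Theorem~\ref{ThCLTScale} to a CLT for $z_n$ via local linearity of the scale function (the paper's Lemma~\ref{LmQExp}(e), based on exponential decorrelation of the quantities you call $\tilde h_k$), and for the annealed CLT split $z_n-vn$ into the quenched fluctuation $z_n-b_n(\omega)$ plus the environmental fluctuation $b_n(\omega)-vn$, the latter Gaussian by a stationary CLT, with asymptotic independence of the two pieces. Your characteristic-function and dominated-convergence justification of the independence is more explicit than the paper's brief assertion, and the variance non-degeneracy you correctly flag as the main obstacle is handled in the paper via the hypothesis of Corollary~\ref{CrHit} (i.e.\ Lemma~\ref{rand-sel}) together with Kifer's almost-sure linear growth of $\var(\tau_n)$.
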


\begin{remark}
The ballisticity condition \eqref{balist} ensures that the time needed to move to the right 
{ for Model A}
is in $BV$ as a function 
of the initial condition $x.$ This allows us to apply existing results about the central limit theorem for 
non-autonomous dynamical systems, such as \cite{AR14, CR07, DFGTV, HNTV, NSV12}.  
In case the return time is unbounded, as is the case for Model B, 
one needs to extend the existing result allowing much less
regular functions. This extension is formulated in Theorem \ref{pr-1} and is proven in the appendix.
This result is of independent interest.

We  also hope that our approach will be useful for other models
of motions in random media, and this will be a subject of a future work.
\end{remark}

\section{Notations and definitions}
\label{ScND}

For the sequence of maps $\{G_n\}_{n \in \mathbb{Z}}$ and $k\leq m$ we define maps $G_{k,m}$ as follows
$$
G_{k,m}(x)=
G_{m} \circ\cdots \circ G_{k}(x).
$$
If above we have only one map, i.e. $G_m=G$, for all $m \in \mathbb{Z}$, then 
$G_{k,m}=G^{m-k+1}$.

Let
$$
D^{(2)}_n=\{(t_1, t_2, \cdots, t_{n}):t_k \in \{-1,1\}, 1\leq k \leq n\}.
$$

\begin{definition}
For $n \geq 1$, $t \in D_n^{(2)}$, let
$$
s_k(t)=t_1 + \dots + t_k, 
\quad 1 \leq k \leq n,\quad\text{and}\quad
s_0(t)=0.
$$
\end{definition}

We will also be interested in the following subset of $D_n^{(2)}$
\begin{definition}\label{r-def}
Let $R_n^{(2)} \subset D_n^{(2)}$, be the set of all $t=(t_1, \dots, t_n)$, for which
$$
s_k(t)
\leq 0, \quad 1\leq k \leq n-1,
\quad
\text{and}
\quad
s_n(t)= 1.
$$
\end{definition}
\begin{remark}
For even $n$ we have $R_n^{(2)}=\emptyset$.
\end{remark}

\begin{definition}\label{I-def}
Let $A \subset \mathbb{T}$ be a set such that there is a collection of closed and disjoint intervals $\{I_k\}_{k=1}^m$ so that $\cup_{k=1}^m I_k\subset \bar A$ and $|A \setminus\cup_{k=1}^n I_k|=0$. Denote
$$
\I(A)=\{I_1, \dots, I_n\}.
$$
\end{definition}
Note that if $\I(A)$ exists, then it is unique. 

\medskip

We now recall some definitions and facts from \cite{CR07}. Denote by $BV$ the space of all functions with bounded variation and by $V (f)$ the variation of the function $f \in BV$. The space $BV$ is equipped
with the norm
$$
|f|_{BV} := V (f) + ||f||_1,
$$
where $||f||_1$ is relative to the Lebesgue measure. For $f \in BV$, we have: $||f||_\infty \leq |f|_{BV}$.

Define also $BV_0 = \{f \in  BV : \int_\mathbb{T} f dx = 0\}$. \smallskip

We will be interested in maps satisfying

\begin{hypothesis}\label{hyp}
$G:\mathbb{T}\rightarrow \mathbb{T}$ is such that there exists a finite or countable partition $(I_j)$ of $[0,1]$ or $\mathbb{T}$ such that the restriction of the map  to each interval $I_j$ is strictly monotone  and $G\vert_{I_j} \in C^2(I_j)$. 
We also assume that
$$
{ \gamma \quad :=\inf _{j} \inf _{x \in I_{j}}\left|G^{\prime}(x)\right|>2,
\quad\mathrm{and}\quad
\sup _{j} \sup_{x \in I_j}\left|\frac{(G(x))''}{(G(x)')^2}\right|<\infty.}
$$
\end{hypothesis}
{ Given a map $G$ as above} define
$$
K :=\sup_{j} \sup_{x \in I_j}\left|G'(x)\right|\text{ and }K_1 :=\sup _{j} \sup_{x \in I_j}\left|G''(x)\right|.
$$

Note that we can  have $K, K_1=\infty$.

\begin{definition}\label{markov}
We say that
{a collection of intervals $\{I_k\}_{k=1}^m$, with $\DS \bigcup_{k=1}^m I_k= \mathbb{T}$
is a Markov partition for a}
map $G$ satisfying Hypothesis \ref{hyp}, 
if
$$
G(I_k)=\mathbb{T}, \quad 1\leq k \leq m,
$$
and $G$ is injective and continuous on each $I_k$. 
\end{definition}

The transfer operator of a map satisfying Hypothesis \ref{hyp} is given by
\begin{equation}\label{deftrans-1}
P_G f(x)=\sum_{j} f\left(\sigma_{j} x\right) \frac{1}{\left|G^{\prime}\left(\sigma_{j} x\right)\right|} 1_{G\left(I_{j}\right)}(x),
\end{equation}
where $\sigma_j$ is the inverse function of the restriction of $G$ on $I_j$. 
It is well known that
$$
\int_\mathbb{T} \left(P_{G} f \right) g d x=\int_\mathbb{T} f(x) g(Gx) d x, \;\;\forall f \in L^{1}, g \in L^{\infty}.
$$
Note also the following form of the transfer operator
$$
P_G(f)(x)=\sum_{y: G(y)=x}\frac{f(y)}{\left|G^{\prime}(y)\right|}.
$$
\begin{lemma}\label{l2-norm}
Let $f \in L^2(\T)$. Then
$$
\Big\|P f\Big\|_2 \leq \sqrt{|P \mathbf{1}|_\infty} \; \|f\|_2.
$$
\end{lemma}
\begin{proof}
By H\"older's inequality
$$
\Big(\sum_{y: G(y)=x}\frac{f(y)}{\left|G^{\prime}(y)\right|}\Big)^2\leq \Big(\sum_{y: G(y)=x} \frac{f^2(y)}{|G'(y)|}\Big) \Big(\sum_{y: G(y)=x} \frac{1}{|G'(y)|}\Big). 
$$
Integrating this inequality we obtain
$$
\int_\mathbb{T} (Pf)^2 dx \leq \int_\mathbb{T}Pf^2dx |P \mathbf{1}|_\infty=|P \mathbf{1}|_\infty \int_\mathbb{T}f^2dx.
$$
Taking square root on both sides we get the required estimate.
\end{proof}

Let $\mathcal{P}$ be a set of contractions on $L^1$ (a set of linear operators satisfying $\|Pf\|_1\leq \|f\|_1$, for every $P\in \mathcal{P}$).
Following \cite{CR07} the $p$ distance between two transfer operators $R,R'$ will be defined as follows
\begin{equation}
\label{DefDP}
d_p\left(R, R^{\prime}\right)=\sup _{\left\{f \in BV :|f|_{BV} \leq 1\right\}}\left\|R f-R^{\prime} f\right\|_p.
\end{equation}
When $p=1$, we will drop the index and denote it by $d$.
For $P \in \mathcal{P}$, we denote its $\delta$ neighborhood by $B(P, \delta):=\{R \in \mathcal{P}: d(R, P)<\delta\}$.

\smallskip

We say that the collection $\mathcal{P}$ satisfies the Lasota-Yorke
property $\bold{(LY)}$, if there exists $\rho \in (0,1)$ and $C>0$, so that for any $P \in \mathcal{P}$ we have
\begin{equation}\label{LY}
\forall f \in BV, \quad V(P f) \leq \rho V(f)+C\|f\|_1\tag{$\bold{LY}$}.\end{equation}

We say the subset $\mathcal{P}_0 \subset \mathcal{P}$ satisfies the exponential decay of correlations property $\bold{(Dec)}$ in $BV_0$ if there exist $\theta < 1$ and
$K > 0$ such that, for all integers $l \geq 1$, all $l$-tuples of operators $P_1, \dots , P_l$
in $\mathcal{P}_0$ we have
\begin{equation}\label{dec}
\forall f \in BV_0, \quad |P_l\cdots P_1f|_{BV} \leq  K \theta^l
|f|_{BV}\tag{$\bold{Dec}$}.
\end{equation}
It follows from \eqref{LY} (see \cite[Lemma 2.4]{CR07}) that there exists $M>0$, so that for any $P_n, \dots, P_1 \in \mathcal{P}$ and $f \in BV$
\begin{equation}\label{den-bdd}
\left|P_{n} \cdots P_{1} f\right|_{BV} \leq M|f|_{BV},\quad \text{ for all } n \geq 1.
\end{equation}

We say that the sequence of operators $\{P_n\}_{n \geq 1}$ satisfies the condition $\bold{(Min)}$, if there exists $\sigma>0$ such that 
\begin{equation}\label{Min}
P_nP_{n-1}\dots P_1 \mathbf{1}(x)\geq \sigma, \quad \forall x \in \mathbb{T},\forall n \in \mathbb{N}\tag{$\bold{Min}$}.
\end{equation}
We say that the collection $\mathcal{P}$ satisfies conditions \eqref{Min} if any sequence in $\mathcal{P}$ satisfies the property \eqref{Min} with the same constant $\sigma$.
In the sequel we will use the notation 
$$
\mathcal{P}^{n} \mathbf{1} = P_nP_{n-1}\cdots P_1 \mathbf{1}.
$$

We recall a criterion for verifying the condition \eqref{dec}:


\begin{proposition}(\cite[Proposition 2.10]{CR07})
\label{nbrhd-sc}
Let $\mathcal{P}$ be a collection of contractions satisfying \eqref{LY} and $P \in \mathcal{P}$ that satisfies \eqref{dec}, i.e.
\begin{equation}\label{spc-gap}
|P^nf|_{BV} \leq C \gamma^n |f|_{BV}, \quad \forall f \in BV_0.
\end{equation}
Then there exists $\delta_{0}>0$, such that the set $\mathcal{P}_{0}=B\left(P, \delta_{0}\right)\cap \mathcal{P}$ satisfies \eqref{dec} in $BV_{0}$
\end{proposition}

The relevance of the properties introduced above comes from the following result.

\begin{theorem}{\cite[Theorem 5.1]{CR07}}
\label{CRThm}
Let $(f_n)$ be a sequence of observables, so that 
$\DS \sup_{n\geq 1} |f_n|_{BV}<\infty$. Assume that for the sequence of transformations $\{T_n\}_{n \geq 1}$ the corresponding set of transfer operators $\{P_{T_n}\}_{n \geq 1}$ satisfy  \eqref{Min} and \eqref{dec}. Let
$$
S_n(x) =\sum_{k=0}^{n-1}f_n(T_{1,n}(x))-\int_{\mathbb{T}}f_n(T_{1,n}(x))dx
$$ 
where $T_{1,n}=T_n\circ \dots \circ T_1$.
If the norms $\|S_n\|_2$ are unbounded as $n \rightarrow \infty$ then
$$
\frac{S_{n}}{\|S_n\|_2} \Rightarrow \mathcal{N}(0,1).
$$
\end{theorem}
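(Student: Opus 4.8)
The plan is to reduce the assertion to convergence of characteristic functions, then use the exponential mixing \eqref{dec} — with \eqref{Min} keeping all relevant densities non-degenerate — to replace the sum by one of nearly independent blocks, and finish with the classical Lindeberg--Feller CLT for a triangular array. Write $\hat f_k=f_k-\int f_k\circ(T_k\cdots T_1)\,dx$, so that $Y_k:=\hat f_k\circ(T_k\cdots T_1)$ has zero Lebesgue mean, and put $S_n=\sum_{k=1}^nY_k$, $s_n^2=\|S_n\|_2^2$. By L\'evy's continuity theorem it is enough to show $\int e^{itS_n/s_n}\,dx\to e^{-t^2/2}$ for each fixed real $t$.

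First I would extract a quantitative mixing estimate from \eqref{dec}. Using the identities $\int(g\circ S)h\,dx=\int g\,(P_Sh)\,dx$ and $P_{S\circ S'}=P_SP_{S'}$, one rewrites, for $j\le k$,
$$\int Y_jY_k\,dx=\int (P_{T_k}\cdots P_{T_{j+1}})\big[\hat f_j\cdot(\mathcal{P}^j\mathbf 1)\big]\cdot\hat f_k\,dx .$$
Since $\int\hat f_j\cdot(\mathcal{P}^j\mathbf 1)\,dx=\int\hat f_j\circ(T_j\cdots T_1)\,dx=0$, the bracket lies in $\mathcal{V}_0$; moreover $|\hat f_j\cdot(\mathcal{P}^j\mathbf 1)|_v$ is bounded uniformly in $j$ by the algebra property of $\mathcal{V}$, the uniform Lasota--Yorke bound $\sup_j|\mathcal{P}^j\mathbf 1|_v<\infty$ (a consequence of Hypothesis \ref{hyp}), and $\sup_k|f_k|_v<\infty$. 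Hence \eqref{dec} yields $|\int Y_jY_k\,dx|\le C\theta^{k-j}$, and the same manipulation iterated shows that the mixed moments of finitely many of the $Y_k$ factorize over large time gaps up to an exponentially small error; \eqref{Min}, giving $\mathcal{P}^j\mathbf 1\ge\sigma>0$, keeps all these reference densities comparable to Lebesgue so that no degeneracy occurs. Two by-products: $s_n^2=O(n)$, and $\|S_n\|_2\to\infty$ rather than merely being unbounded, since $\var(S_m)\ge\var(S_n)-O(1)$ for all $m\ge n$ by the covariance bound just proved.

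Next I would split $\{1,\dots,n\}$ into consecutive big blocks $B_1,\dots,B_{r_n}$ of length $p_n$ separated by small blocks of length $q_n$, put $U_i=\sum_{k\in B_i}Y_k$, and seek $p_n,q_n,r_n\to\infty$ with $r_n\theta^{q_n}\to0$ arranged so that: (i) the small blocks and all cross-covariances between distinct big blocks contribute $o(s_n^2)$, whence $\sum_i\EXP[U_i^2]=s_n^2(1+o(1))$ by Step 1; and (ii) processing the blocks one at a time through the composed transfer operators and invoking \eqref{dec} across each length-$q_n$ gap — with \eqref{Min} keeping the density carried forward in a fixed bounded ball of $\mathcal{V}$ — one has
$$\Big|\int e^{\,it\sum_i U_i/s_n}\,dx-\prod_{i=1}^{r_n}\int e^{\,itU_i/s_n}\,dx\Big|\longrightarrow0 .$$
Granting (i) and (ii), and noting $\|U_i\|_\infty\le Cp_n$ because $\|f_k\|_\infty\le|f_k|_v$ is uniformly bounded, the product above is the characteristic function of a sum of independent mean-zero summands; if moreover $p_n=o(s_n)$, the truncation at level $\varepsilon s_n$ in the Lindeberg condition is vacuous for large $n$, so the Lindeberg--Feller CLT gives $\prod_i\int e^{\,itU_i/s_n}\,dx\to e^{-t^2/2}$. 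Combining with (ii) and with the small blocks being $o(s_n)$ in $L^2$, we get $\int e^{itS_n/s_n}\,dx\to e^{-t^2/2}$, i.e. $S_n/s_n\Rightarrow\mathcal{N}(0,1)$.

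I expect the main obstacle to be the simultaneous choice of block sizes: $p_n$ must be small compared with $s_n$ (for the Lindeberg truncation to be vacuous), $q_n$ must be large enough that $r_n\theta^{q_n}\to0$ (for the decoupling), and yet the $r_n$ small blocks — of total length $\asymp r_nq_n$ — may contribute only $o(s_n^2)$ to the variance; this balance is genuinely tight when $s_n$ diverges slowly, which is the real content of the theorem and may force a more delicate argument than a single fixed blocking (for instance a direct recursion on the characteristic functions, peeling off one mesoscopic window at a time using \eqref{dec}). A smaller technical point: the exponentials $e^{itU_i/s_n}$ need not have uniformly bounded variation, so \eqref{dec} cannot be applied to them verbatim — one instead keeps $e^{itU}$ out of the $\mathcal{V}$-norm and, across each gap, controls only the $|\cdot|_v$-distance between the density carried forward and a reference density, for which the Lasota--Yorke inequality and \eqref{Min} are both essential. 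An alternative to the whole scheme is a Gordin-type martingale--coboundary decomposition $Y_k=m_k+h_k\circ(T_k\cdots T_1)-h_{k+1}\circ(T_{k+1}\cdots T_1)$, with $h_k$ a \eqref{dec}-summable sum of transfer-operator iterates and $(m_k)$ a reverse-martingale-difference array, followed by a martingale CLT for $\sum m_k$; then the difficulty migrates to proving convergence of the normalized conditional variances in this non-autonomous setting.
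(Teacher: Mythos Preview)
This theorem is not proved in the present paper; it is quoted from \cite[Theorem~5.1]{CR07} and used as a tool. The method of \cite{CR07} is the Gordin-type reverse-martingale--coboundary representation---the very decomposition the paper reproduces in \eqref{dc-1}--\eqref{dc-2} for other purposes---writing $S_n=\sum_{k}U_k+\mathbf{H}_n\circ G_{1,n}$ with $\sup_n|\mathbf{H}_n|_v<\infty$ (via \eqref{dec} and \eqref{Min}), and then applying a reverse-martingale CLT to $\sum_k U_k$. In other words, \cite{CR07} proceeds precisely via the ``alternative'' you sketch in your final sentence, not via Bernstein blocking.

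Your primary route has a genuine gap at the stated level of generality. The constraints you list cannot coexist when $s_n$ grows slowly: from $p_n=o(s_n)$ and $r_n(p_n+q_n)\sim n$ one gets $r_n\gtrsim n/s_n$, and with $q_n\to\infty$ the small-block budget satisfies $r_nq_n\gtrsim n/s_n$, which is $o(s_n^2)$ only if $s_n^3\gg n$. Thus whenever $s_n=o(n^{1/3})$---and the hypothesis ``$\|S_n\|_2$ unbounded'' allows $s_n$ to grow arbitrarily slowly---no big-block/small-block scheme can simultaneously achieve your (i), (ii) and the vacuous Lindeberg truncation; the recursive ``peeling'' variant faces the same accounting. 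This is a hard barrier, not merely a tight balance, and it is exactly why \cite{CR07} uses the martingale decomposition instead. Your preliminary ingredients---the covariance estimate $\left|\int Y_jY_k\,dx\right|\le C\theta^{k-j}$, the upgrade from ``unbounded'' to $s_n\to\infty$, and the role of \eqref{Min} in keeping the densities $\mathcal{P}^j\mathbf{1}$ non-degenerate---are all correct and are also used in \cite{CR07}.
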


Theorem \ref{CRThm} is sufficient to handle Model A. For Model B we need an extension of this result,
namely, Theorem \ref{pr-1} { formulated in \S \ref{SSConzeRaugi}} 
and proven in Appendix \ref{app}. 
Theorem \ref{pr-1} allows to handle unbounded observable and  is of independent interest.

\begin{definition}
We say that the observable $\phi$ is cohomologous to zero for the map $T$ if there exist an observable $\mathbf{H}\in L^2$ and $c\in \mathbb{R}$ such that
$$
\phi +c = \mathbf{H} - \mathbf{H}\circ T.
$$
\end{definition}

\section{Some auxiliary results}
\label{ScAux}

\subsection{An extension of a result of Conze-Raugi}
\label{SSConzeRaugi}
In Appendix \ref{app} we prove the following extension of Theorem \ref{CRThm}. Observe that the functions $\{f_n\}$ below can also be unbounded

\begin{theorem}\label{pr-1}
Assume the operators $\{P_n\}_{n\geq 0}$ fulfill the conditions \eqref{dec} and \eqref{Min} on $BV$ and let $\{f_n\}_{n \geq 1}$ be a sequence of observables, such that there exists $D>0$, so that
\begin{equation}\label{norm-bd}
\sup _{\left\{g \in BV :|g|_{BV} \leq 1\right\}}|P_n(f_{n-1}g)|_{BV}<D \hbox{ and }
\sup _{\left\{g \in BV :|g|_{BV} \leq 1\right\}}|P_n(f^2_{n-1}g)|_{BV}<D,
\end{equation}
for all $n \geq 1$. Consider the sum
$$
S_n(x) =\sum_{k=0}^{n-1}\tilde{f}_k,
\text{ where } 
\tilde{f}_k=f_k(G_{1,k}(x))-\int f_k(G_{1,k}(x))dx. 
$$
If the sequence of variances $\sigma_n=\|S_n\|_2$ is unbounded and for every $\varepsilon>0$ we have
\begin{equation}\label{asm}
\lim_{n \rightarrow \infty}\frac{\sum_{k=1}^n \int \tilde{f}_{k}^{2}(x) 1_{\left[\varepsilon \sigma_{n}, \infty\right)}\left(\tilde{f}_{k}^{2}(x)\right)dx
}{\sigma^2_n}=0,
\end{equation}
then
$\DS 
\frac{S_n}{\sigma_n} \Rightarrow \mathcal{N}(0,1)
$
as $n \rightarrow \infty$.
\end{theorem}

{
\begin{remark}
Let $\T_n=\T\times \{n\}.$ Note that in several results in Sections \ref{ScND} and \ref{ScAux} including Theorems \ref{CRThm} 
and \ref{pr-1} we consider maps $G_{1,k}$ with domain $\T_1$ and range $\T_{k+1}.$ 
In particular $f_k$ are defined on $\T_{k+1}.$
This is done to have the same notation as in \cite{CR07}.
However, in applications we will deal with maps $G_{0, k-1}$ 
with domain $\T_0$ and range $\T_{k}.$  This is done since it is natural to consider the walk started at the origin 
rather than site 1.
\end{remark}
}

\subsection{Lasota-Yorke inequality.}
We need the following standard fact whose proofs could be 
found in \cite{CR07}, page 105.
\begin{lemma}
\label{LmBoundary-Var}
(a) Let $[u, v] \subset[c, d] \subset[0,1]$, and $f$ be of bounded variation. Then
\begin{equation}\label{est-1}
|f(u)|+|f(v)|
\leq V_{[c, d]}(f)+\frac{2}{(d-c)} \int_c^d |f(t)| d t.
\end{equation}

(b) In particular
$$ |f(u)|+|f(v)|
\leq V_{[u, v]}(f)+\frac{2}{(v-u)} \int_u^v |f(t)| d t. $$
\end{lemma}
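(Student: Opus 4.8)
The plan is to deduce both parts from a single pointwise estimate obtained by the triangle inequality, and then integrate.

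First I would record the key estimate. Fix any $t$ with $u\le t\le v$. Since $\phi(u)-\phi(t)$ is the increment of $\phi$ over the subinterval $[u,t]$ and $\phi(v)-\phi(t)$ the increment over $[t,v]$, and these two subintervals are non-overlapping with union $[u,v]$, additivity of the variation gives
\[
|\phi(u)-\phi(t)|+|\phi(v)-\phi(t)|\ \le\ V_{[u,t]}(\phi)+V_{[t,v]}(\phi)\ =\ V_{[u,v]}(\phi).
\]
Combining this with the two triangle inequalities $|\phi(u)|\le|\phi(t)|+|\phi(u)-\phi(t)|$ and $|\phi(v)|\le|\phi(t)|+|\phi(v)-\phi(t)|$ yields the key bound
\[
|\phi(u)|+|\phi(v)|\ \le\ 2|\phi(t)|+V_{[u,v]}(\phi)\qquad\text{valid for every }t\in[u,v].
\]

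For part (b) I would integrate this last inequality in the variable $t$ over $[u,v]$. The left-hand side does not depend on $t$, so $(v-u)\bigl(|\phi(u)|+|\phi(v)|\bigr)\le 2\int_u^v|\phi(t)|\,dt+(v-u)\,V_{[u,v]}(\phi)$, and dividing by $v-u>0$ gives exactly the inequality asserted in (b).

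For part (a) I would run the same argument over the larger interval: using $V_{[u,v]}(\phi)\le V_{[c,d]}(\phi)$ together with the analogous triangle-inequality bounds for $t$ ranging over $[c,d]$, then averaging in $t$ over $[c,d]$ and dividing by $d-c$, one arrives at $V_{[c,d]}(\phi)+\tfrac{2}{d-c}\int_c^d|\phi(t)|\,dt$ on the right-hand side. (In particular (b) is the special case $[c,d]=[u,v]$.)

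I do not expect any genuine obstacle here — this is a soft fact about functions of bounded variation, essentially the estimate underlying the Lasota--Yorke inequality. The only points requiring care are the bookkeeping of the variation, so that each increment of $\phi$ is charged to disjoint subintervals (otherwise one loses a factor $2$ in front of $V$), and the degenerate cases $u=v$ or $c=d$, which are vacuous since the asserted right-hand sides then carry a vanishing denominator.
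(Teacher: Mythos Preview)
Your argument for part (b) is correct and is the standard one; the paper does not give its own proof here but merely cites \cite{CR07} and \cite{L-Y}.

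For part (a), however, the sketch has a genuine gap. The pointwise bound $|\phi(u)|+|\phi(v)|\le 2|\phi(t)|+V_{[u,v]}(\phi)$ you established is valid only for $t\in[u,v]$: it uses that the increments $\phi(u)-\phi(t)$ and $\phi(v)-\phi(t)$ live on \emph{disjoint} subintervals whose variations add up to $V_{[u,v]}(\phi)$. For $t\in[c,d]\setminus[u,v]$ this disjointness fails---for instance when $t<u<v$ the intervals $[t,u]$ and $[t,v]$ overlap on $[t,u]$---and one only obtains $|\phi(u)-\phi(t)|+|\phi(v)-\phi(t)|\le 2V_{[t,u]}(\phi)+V_{[u,v]}(\phi)$, which is not dominated by $V_{[c,d]}(\phi)$. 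So the ``analogous'' bound over all of $[c,d]$ is not available, and averaging over $[c,d]$ does not yield \eqref{est-1}. In fact the inequality in (a) is false as stated: take $[c,d]=[0,1]$ and $\phi$ continuous, piecewise linear with $\phi\equiv 0$ on $[0,4/5]$ and $\phi(1)=1$; then $V_{[0,1]}(\phi)=1$ and $\int_0^1|\phi|=1/10$, but with $[u,v]=[9/10,1]$ the left side equals $3/2$ while the right side equals $6/5$. You correctly flagged the variation bookkeeping as the only delicate point---the proposal breaks precisely there, and no argument can recover (a) without an extra hypothesis or a larger constant in front of $V_{[c,d]}$.
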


\begin{lemma}\label{lz-prf}
Let $G$ satisfy Hypothesis \ref{hyp} and suppose that there is an interval $W\subset\T$ such that
$T$ is smooth everywhere except, possibly, at the endpoints of $W.$ Assume also there is $K>0$ such that 
$\DS \sup_{x \in \mathbb{T}\setminus \partial W}|G'(x)|\leq K$
Then there is $C=C(\gamma, K, K_1)>0$ such that
\begin{equation}
V(P_G f) \leq \frac{3}{\gamma} \mathrm{V} (f)+C\|f\|_1 .
\end{equation}
\end{lemma}
\begin{proof}
Recall that
$$
P_{G} f(x)=\sum_{j} f\left(\sigma_{j} x\right) \frac{1}{\left|G^{\prime}\left(\sigma_{j} x\right)\right|} 1_{G\left(I_{j}\right)}(x),
$$
where $\sigma_j$ is the inverse function of $G$ on its intervals of monotonicity $(I_j)$.
We can assume that $|W|\leq\frac{1}{2}$. Otherwise, instead of $W$ we can consider $W^c$. 
Since $G$ has only two discontinuity points, the partition $(I_j)$ can be chosen in such a way that there will be at most one interval $I \in (I_j)$, with $|I|<\frac{1}{2([K]+1)}$.
Indeed, we can define the partition $(I_j)$ on $W^c$ so that $|I_j|=\frac{|W^c|}{[K]+1}$,  $j=1, \dots, [K]+1$. Since $\frac{1}{2([K]+1)}<|I_j|<\frac{1}{K}$, then $G|_{I_j}$ will be one-to-one on each one of these intervals. If now $|W|<\frac{1}{2([K]+1)}$, then we will take $W$ to be one of the partition intervals, otherwise we divide $W$ into intervals of size $\frac{1}{2([K]+1)}$ and a reminder interval $I_i$, so that $|I_i|<\frac{1}{(2[K]+1)}$.

Note that
$$
V(P_G f)\leq \sum_{j} V\Big(f\left(\sigma_{j} x\right) \frac{1}{\left|G^{\prime}\left(\sigma_{j} x\right)\right|} 1_{G\left(I_{j}\right)}\Big)
\leq $$
\begin{equation}
\label{VProd}
\sum_{j}\left( V_{G(I_j)}\left[\left(\frac{f}{G^{\prime}}\right) \circ \sigma_{j}\right]
+\Big[\Big|\frac{f}{G^{\prime}}\Big|(\sigma_{j} \alpha_{j})+\Big|\frac{f}{G^{\prime}}\Big|
(\sigma_{j} \beta_{j})\Big]\right)=:  \text{I}+\text{II}
\end{equation}
where $G(I_j)=[\alpha_j, \beta_j].$ 
By an inequality in \cite{CR07}, page 106, we have 
$$ V_{G(I_j)}\left[\left(\frac{f}{G^{\prime}}\right) \circ \sigma_{j}\right]=
V_{I_j}\left[\left(\frac{f}{G^{\prime}}\right) \right]
\leq \frac{V_{I_j}(f)}{\gamma}+\frac{K_1}{\gamma^2} \int_{I_j} |f(t)| dt. $$
Summing over $j$ we get
\begin{equation}
\label{1G-1}
\text{I}\leq \frac{V(f)}{\gamma}+\frac{K_1}{\gamma^2}\|f\|_1. 
\end{equation} 
Next for all  monotonicity intervals with $|I_j|>\frac{1}{(2[K]+1)}$
we use Lemma \ref{LmBoundary-Var}(b) obtaining
$$ \Big|\frac{f}{G^{\prime}}\Big|(\sigma_{j} \alpha_{j})+
\Big|\frac{f}{G^{\prime}}\Big|(\sigma_{j} \beta_{j})
\leq\frac{1}{\gamma} \left[|f|(\sigma_{j} \alpha_{j})+
|f|(\sigma_{j} \beta_{j})\right]$$
\begin{equation}
\label{1G-2}
\leq 
\frac{V_{I_j}(f)}{\gamma}+\frac{2}{\gamma|I_j|}\int_{I_j}  |f(x)| dx
\leq \frac{V_{I_j}(f)}{\gamma}+\frac{4([K]+1)}{\gamma} \int_{I_j}  |f(x)| dx.
\end{equation}

It remains to handle the shortest interval $I_i.$
Let $I_{i+1}$ be a partition element adjacent to $I_i$ and set 
$I= I_i \cup I_{i+1}$.  Then $|I|>\frac{1}{2([K]+1)}$ and by \eqref{est-1}, applied to $I_i\subset I=[c,d]$, we have
$$
\Big|\left(\frac{f}{G^{\prime}}\right)\left(\sigma_{i} \alpha_{i}\right)\Big|+\Big|\left(\frac{f}{G^{\prime}}\right)\left(\sigma_{i} \beta_{i}\right)\Big| \leq
$$
\begin{equation}
\label{1G-3}
 \frac{1}{\gamma}(|f(\sigma_i \alpha_i )+|f|(\sigma_i \beta_i)) 
 \leq \frac{1}{\gamma} V_{I}(f) + \frac{2}{\gamma |I|}\int_{I} |f|dx.
\end{equation}
Summing the above estimates we obtain
$$
V(P_{G}f)
 \leq \frac{3}{\gamma}V(f) + C\|f\|_1
$$
where the factor $\frac{3}{\gamma}$ is the sum of three terms of size $\frac{1}{\gamma}$
coming from \eqref{1G-1}, \eqref{1G-2}, and \eqref{1G-3} respecively.
This completes the proof.
\end{proof}

\subsection{Positivity of density.}\label{dens}

We say that the sequence of expanding maps $\{G_n\}_{n \geq 1}$ satisfies  \emph{property }(C) if for every $\varepsilon>0$, there exists $s\geq 1$ and $M>0$ such that for every $x\in \mathbb{T}$, $n \in \mathbb{N}$ and any interval $I\subset\mathbb{T}$, with $|I|>\varepsilon$, there exists $y=y(n,I,x) \in I$ so that
$$
G_{n, n+s}(y)=x, \quad |D_yG_{n, n+s}|\leq M.
$$ 
\begin{lemma}\label{rmcov}
Assume there exists $K>0$, such that  for every $n \geq 1$, $$ \sup_{j}\sup_{x \in I^{(n)}_j}|G_n'(x)|\leq K.$$ 
{If $\{G_n\}$ satisfy 
 the classical covering property 
 namely for every interval $I$ there exist a number $s \in \mathbb{N}$, such that for every $n \geq 1$,\;
$\DS 
G_{n,n+s}(I)=\mathbb{T}
$
then property (C) holds.}
\end{lemma}
\begin{proof}
Take a partition of $\mathbb{T}$ into intervals $\{J_k\}$ of lengths in $[\frac{\varepsilon}{4} ,\frac{\varepsilon}{2}]$. For each $J_k$ we can find its own covering number $s_k$. Note that any number $s$ larger than $s_k$ is again a covering number for $J_k$. Let $s$ be the largest number in the set $\{s_k\}$. Now observe that any interval $J$ of length larger than $\varepsilon$ contains an interval from $\{J_k\}$ in its interior. Hence we will have $G_{n,n+s}(I)=\mathbb{T}$. It remains to notice that 
$|D_x G_{n,n+s}|\leq K^s.$
\end{proof}

We say the map $G$ satisfies property (C) if the sequence $G, G, \dots$ satisfies this property.

The following proposition extends 
several classical results for a single expanding map (see \cite{Liv1}) to 
{ a sequence of} expanding maps satisfying property (C).

\begin{proposition}\label{sptgp}
Let $\mathcal{G}=\{G_n\}_{n \geq 1}$, be a sequence of expanding maps so that for each $n \geq 1$ there is an interval $J_n \subseteq \mathbb{T}$ such that $G_n(J_n)=\mathbb{T}$ and $\DS \sup_{n \geq 1}\sup_{x \in J_n}|G_n'(x)|\leq K_0$, for some finite $K_0$. Assume also the set of associated transfer operators $\{P_n\}_{n \geq 1}$ satisfies property \eqref{LY}. Then
\begin{enumerate}
  \item If $\mathcal{G}$ satisfies property (C) then there exists $\sigma>0$ so that for any $n \geq 1$ and $x \in \mathbb{T}$
$$
P_nP_{n-1}\cdots P_1\mathbf{1}(x)\geq \sigma.
$$

  \item Let $G\in \mathcal{G}$ be an expanding map which satisfies property (C). Then $P$ also satisfies property \eqref{dec}.

\end{enumerate}

\end{proposition}

\begin{proof}
(a)
We follow the proof of \cite[Proposition 2]{AR14}. 
For $a>0$, let
$$
\mathcal{E}_{a}=\left\{f \in BV : f \geq 0, V(f) \leq a \int f\right\}.
$$
By Lemma 3.2 in \cite{Liv1}, for any $f \in \mathcal{E}_{a}$ there exist an interval $I$, with $|I|=\frac{1}{2a}$, so that $f(x) \geq \frac{1}{2} \int f$ for all $x \in I$.
Note that by the Lasota-Yorke inequality \eqref{LY} we have
\[
V(P_{r} \cdots P_{1} f) \leq \rho^{r}V(f)+C_{r}\|f\|_{1} \leq\left(a \rho^{r}+C_{r}\right) \int f.
\]
Hence, for $a \geq \frac{C_{r}}{1-\rho^{r}},$ we have $\left(P_{r} \ldots P_{1}\right)\left(\mathcal{E}_{a}\right) \subset \mathcal{E}_{a}$ for any choice of $P_{1}, \ldots, P_{r} .$ In
order also to have $\mathbf{1} \in \mathcal{E}_{a},$ we will actually choose $a=\max \left\{1, \frac{C_{r}}{1-\rho_{r}}\right\}$.
By Property (C), for $\varepsilon=1/2a$ we can find $s\geq 1$, $M<\infty$  so that for any $x \in \mathbb{T}$ and any $n \in \mathbb{Z}$ one can find $\zeta \in I$, with $G_{n,{n+s}}(\zeta)=x$ and 
$\DS
|D_{\zeta}G_{n,{n+m}}| \leq M.
$

Let $m \geq 0$. We have $P_{{1},{m+s}} \mathbf{1}=P_{{m+1},{m+s}} P_{1,m} \mathbf{1} .$ Write $m=p_{m} r+q_{m},$ with $0 \leq q_{m}<r$.
Note that for $k\leq q_m$ we have
\[
\quad P_{{1},{k}} \mathbf{1}(x)\geq K_0^{-k},
\]
since all the maps $G_{n}$ have intervals $J_n$ so that $G_n(J_n)=\mathbb{T}$  and $|G_n'(x)|\leq K_0$, for all $x \in J_n$. As a consequence, we have $P_{{1},{m+s}} \mathbf{1} \geq K_0^{-q_{m}} P_{{m+1},{m+s}} g_{m},$ with $g_{m}=P_{q_{m}+1,m} \mathbf{1}$. Since $P_{q_{m}+1,m} \mathbf{1}$ is a concatenation
of $p_{m}$ blocks of $r$ operators applied to a function in $\mathcal{E}_{a},$ we obtain that $g_{m}$ belongs to $\mathcal{E}_{a}$. Then, there exists an interval $I$, with $|I|=\frac{1}{2a}$, on which $g_{m} \geq \frac{1}{2} .$ This implies
$$
P_{{1},{m+s}} \mathbf{1}(x) \geq K_0^{-q_m} P_{{m+1},{m+s}} \mathbf{1}_{I}(x)
$$
$$
=K_0^{-q_m} \sum_{G_{{m+1},{m+s}}(y)=x} \frac{\mathbf{1}_{I}(y)}{\left|\left(D_y G_{{m+1},{m+s}}\right)\right|} 
\geq \frac{K_0^{-q_m}}{M}
$$
completing the proof.

(b) By part (a) for a proper choice of the parameter $a$ we have that $P^r \left(\mathcal{E}_{a}\right) \subset \mathcal{E}_{a}$. 
{ Given this inclusion the proof of \eqref{dec} can be made
the same way as in \cite[Section 3]{Liv1} or \cite[\S 3.2]{Viana} so we omit it.}
\end{proof}

\begin{lemma}\label{min-1}
Let $\{G_n\}_{n \geq 1}$ be a sequence of maps and intervals $\{W_n\}_{n \geq 1}$ such that for each $n \geq 1$ $G_n$ is continuous everywhere on $\mathbb{T}$, except possibly at the endpoints of $W_n$. Assume also we have $|G_n(x)'|\leq K$ at all points $x$ away from discontinuity points. Then there exists $\sigma>0$ such that
$$
\left(P_n\cdots P_1 1\right)(x)\geq \sigma, \quad \forall x \in \mathbb{T}, n\geq 1,
$$
where $P_n$ is the transfer operator for $G_n$.
\end{lemma}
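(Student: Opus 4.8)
The plan is to combine three ingredients: a uniform bound on the variation of $f_n:=P_n\cdots P_1\mathbf 1$ coming from the Lasota--Yorke inequality, conservation of mass ($\int_{\T}f_n=1$ for all $n$), and the expansion of the $T_n$, which spreads a density that is bounded below on a macroscopic arc into one bounded below on all of $\T$ in a fixed number of steps. First I would record two facts about a single map $T=T_n$. Since $T$ is continuous away from the at most two endpoints of $W_n$, the circle splits into at most two arcs of monotonicity, the longer of which, $J$, has length $\ge 1/2$; by \eqref{BGeom} its total expansion $\int_J|T'|$ is $\ge\gamma/2>1$, so $J$ contains a closed sub-arc mapped by $T$ bijectively onto $\T$. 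Writing $\varsigma$ for the corresponding inverse branch, the transfer-operator formula together with $|T'|\le K$ gives $P_Th(x)\ge K^{-1}h(\varsigma x)\ge K^{-1}\inf_{\T}h$ for every $h\ge 0$ and every $x\in\T$; iterating from $f_0=\mathbf 1$ this already yields $f_n\ge K^{-n}$ on $\T$, which proves the lemma with $\sigma=K^{-m}$ whenever $n\le m$, for the constant $m$ fixed below. Second, Lemma \ref{lz-prf} applies verbatim (with $W=W_n$), so $V(f_n)\le\frac3\gamma V(f_{n-1})+C\|f_{n-1}\|_1=\frac3\gamma V(f_{n-1})+C$; since $\gamma>3$ and $V(f_0)=0$, this gives $V(f_n)\le\bar V:=C/(1-3/\gamma)$ for all $n$ --- the one place the hypothesis $\gamma>3$ enters.

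Next I would extract a macroscopic interval of positivity from the uniform bounds $f_n\ge0$, $\int f_n=1$, $V(f_n)\le\bar V$. The claim is that any $g\ge0$ with $\int g=1$, $V(g)\le\bar V$ is $\ge\epsilon_0$ on some closed arc of length $\ge\ell_0$, for constants $\epsilon_0,\ell_0$ depending only on $\bar V$. To see it, suppose not and partition $\T$ into $\lceil1/\ell_0\rceil$ closed arcs $I_i$ of length $\le\ell_0$, each containing a point where $g<\epsilon_0$; then $\sup_{I_i}g\le\epsilon_0+V_{I_i}(g)$, hence $\int_{I_i}g\le\ell_0(\epsilon_0+V_{I_i}(g))$, and summing (using $\sum_iV_{I_i}(g)\le V(g)\le\bar V$) gives $1\le 2\epsilon_0+\ell_0\bar V$, which fails for $\epsilon_0=1/4$ and $\ell_0=1/(4\bar V)$.

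Finally I would spread this positivity using expansion. If $g\ge\epsilon$ on a closed arc $L$ of length $\ell$, cut $L$ at the $\le2$ discontinuities of the next map $T$ into $\le3$ closed sub-arcs on which $T$ is $C^1$, and let $Q$ be the longest, so $|Q|\ge\ell/3$. Since $|T'|\ge\gamma$, $T(Q)$ is a closed arc of measure $\ge\min(\gamma|Q|,1)\ge\min(\frac\gamma3\ell,1)$, and for every $x\in T(Q)$ there is an inverse branch $\sigma_j$ of $T$ with $\sigma_j x\in Q\subseteq L$, so $P_Tg(x)\ge K^{-1}g(\sigma_j x)\ge K^{-1}\epsilon$. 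Thus one step of the dynamics sends the pair $(\ell,\epsilon)$ to $(\min(\frac\gamma3\ell,1),\epsilon/K)$; because $\gamma/3>1$, after $m:=\lceil\log(1/\ell_0)/\log(\gamma/3)\rceil$ steps the arc of positivity has full measure and hence --- being closed --- equals $\T$, at level $\epsilon_0K^{-m}$. Applying this with $g=f_{n-m}$ (legitimate since $\int f_{n-m}=1$ and $V(f_{n-m})\le\bar V$) gives $f_n\ge\epsilon_0K^{-m}$ on $\T$ for $n>m$; combined with the crude bound for $n\le m$, the lemma holds with $\sigma:=\epsilon_0K^{-m}$.

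I expect the only delicate point to be the bookkeeping in the spreading step: a single $T_n$ may have two discontinuities and may wrap a monotonicity arc around $\T$ several times, so the relevant quantities are the total winding numbers $\int|T_n'|$ over closed sub-arcs rather than naive arc lengths, and one must keep track of which inverse branch is being used. Organizing everything around closed sub-arcs (continuous images of compact connected sets) is what makes the argument go through while also keeping every estimate valid for all $x\in\T$ rather than merely almost every $x$.
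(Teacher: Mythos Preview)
Your argument is correct and more self-contained than the paper's. The paper invokes Proposition~2 of \cite{AR14}, which reduces \eqref{Min} to a covering property---for every interval $I\subset\T$ there is $N=N(|I|)$ with $T_N\circ\cdots\circ T_1(I)=\T$---and then verifies covering by exactly the same ``cut at the two discontinuities, keep the longest third, expand by $\gamma$'' iteration you use in your spreading step. You bypass the black box by working directly with the densities: the Lasota--Yorke bound (Step~1) forces a macroscopic arc of positivity (Step~2), and then the $\gamma/3$ expansion (Step~3) spreads it over $\T$ in a bounded number of steps, while the crude bound $f_n\ge K^{-n}$ handles small $n$. Your version buys an explicit constant and no external citation; the paper's buys brevity.

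One small slip in Step~2: you partition $\T$ into $\lceil 1/\ell_0\rceil$ arcs of length $\le\ell_0$, but the negation of your claim only produces a point with $g<\epsilon_0$ in every arc of length $\ge\ell_0$, so it says nothing about shorter arcs. Use $\lfloor 1/\ell_0\rfloor$ equal arcs instead; each then has length in $[\ell_0,2\ell_0)$, the negation applies, and the summed inequality becomes $1\le\epsilon_0+2\ell_0\bar V$, still a contradiction for $\epsilon_0=1/4$, $\ell_0=1/(4\bar V)$.
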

\begin{proof}
We verify the conditions of Proposition \ref{sptgp}(a). First note that since $\gamma>3$ then one can find an interval $J_n\subset W_n$ or $J_n\subset W_n^c$ so that $G_n(J_n)=\mathbb{T}$ and we obviously have $|D_x G_n|\leq K <\infty$. Property \eqref{LY} follows from Lemma \ref{lz-prf}. To verify property (C), in view of \Cref{rmcov}, it is sufficient to show that $\{G_n\}_{n\geq 1}$ satisfies the covering property: for each $I \subset \mathbb{T}$, there exists $N=N(|I|)$ so that $G_{1,N}(I)=\mathbb{T}$. 

If $W_1\cap I \neq \emptyset$, 
then the intersection with $\partial W_1$ divide $I$ into at most three components.
Let $I_1$ be the largest component.
Then $|I_1|\geq |I|/3$. Consider the image $G_1(I_1)$. Then $|G_1(I_1)|\geq \gamma |I_1|$, since $G_1$ is continuous both inside and outside of $W_1$. Next, we choose the largest interval $I_2 \subset G_1(I_1)$, so that either $I_2 \subseteq W_2$ or $I_2 \cap W_2= \emptyset$. Hence, $|I_2|\geq |G_1(I_1)|/3 > \frac{\gamma}{3}|I_1|$. Repeating this argument, we will obtain a sequence of intervals $(I_n)_{n \geq 1}$, so that
$$
|I_{n+1}|> \Big(\frac{\gamma}{3}\Big)^n |I_{n}|.
$$
Since $\frac{\gamma}{3}>1$, the image of $I_1$ covers the circle in
time $O\left(\ln \left(\frac{1}{|I_1|}\right)\right)$.
\end{proof}

\section{The growth of variance.}
\label{ScVarB}

In this section we study the behavior of the variance of $\tau_n$
$$
\sigma_{n}^{2}=\int_{\mathbb{T}}\left(\sum_{i=1}^{n}\left[r_{i}\left(G_{1, i}(x)\right)-\int_{\mathbb{T}} r_{i}\left(G_{1, i}(y)\right) d y\right]^{2}\right) d x.
$$

The next proposition shows that the linear growth of variance is stable under small perturbations. Note that the observables $r_n$ and $r$ may  be unbounded.

{Recall \Cref{DefDP}.}
\begin{proposition}\label{lm-1}
Let $\mathcal{G}$ be a collection of maps satisfying Hypothesis \ref{hyp} such that its associated set of transfer operators satisfies \eqref{dec}. 
Assume $G \in \mathcal{G}$, $r\in L^2(\mathbb{T})$ are such that the acim $h$ of $G$ is bounded away from zero and $r$ is not cohomologous to a constant for $G$. Let $P$ be the transfer operator of $G$.
{ Then for each $L>0$ there exists $\delta_0>0$ such that the following holds.
Let} $\{G_n\}_{n \geq 1}\subset \mathcal{G}$ 
and $r_n\in L^2(\mathbb{T})$  be such that 
{ denoting  by $P_n$ the transfer operators of $G_n$ we have that}
for  all $n \geq 1$
\begin{equation}\label{gnrl-r}
|P_n (r_{n-1} f)|_{BV}\leq L |f|_{BV}, \quad |\brP (\brr f)|_{BV}\leq L |f|_{BV} ,\quad f \in BV.
\end{equation}
{ and}
 $$d_2(P_n, P)\leq \delta_0, \quad d_2(P_n(r_{n-1} \cdot),  P(r \cdot))\leq \delta_0, \quad
  \|r - r_n\|_2\leq \delta_0.$$ 
Then 
$$
\sigma_n^2=\var(\tau_n) \geq Cn,
$$
where $C=C(\delta_0, \brr, \brG, { L})>0$.
\end{proposition}

\begin{proof}

Define
$$
\tilde{r}_k=r_k - \int_\mathbb{T} r_k(G_{1,k}(x))dx.
$$

By assumption $r$ is not cohomologous to zero and $h(x)\geq c>0$ for almost all $x \in \mathbb{T}$ and some $c>0$. Then by Proposition \ref{A-2} {proven in the appendix} there exists $C>0$ so that
\begin{equation}\label{vrns}
\bar \sigma^2_n \geq Cn,
\end{equation}
where
$$
\bar\sigma_n^2 = n\sum_{i=1}^n \int_\mathbb{T} \tilde{r}^2 hdx + 2 \sum_{k=1}^n(n-k)\int_\mathbb{T} \tilde{r} (x)\tilde{r} (G^k(x))dx
$$
is the variance of the unperturbed system.

Similarly for the general case
\begin{equation}\label{kobe}
\sigma_n^2 = \sum_{i=1}^n \int_\mathbb{T} \tilde{r}^2_{i} (G_{{1},i} (x))dx + 2 \sum_{1 \leq i<j \leq n}\int_\mathbb{T} \tilde{r}_{i} (G_{1,{i}}(x) )\tilde{r}_{j} (G_{1,{j}}(x))dx.
\end{equation}
We now show that for each $\varepsilon>0$, $\delta_0$ can be taken so small that for all large $n\geq 1$
$$
|\bar\sigma_n^2 - \sigma_n^2|\leq \varepsilon n.
$$
To this end we note that
$$
\Big|\int_\mathbb{T} \tilde{r}_{i}\left(G_{1, i} (x)\right) \tilde{r}_{j}\left(G_{1, j} (x)\right) d x \Big|=
\left|\int_\mathbb{T} \tilde{r}_{i} P_{i} \cdots P_{j+1}\left(\tilde{r}_{j} \mathcal{P}^{j} 1\right) d x\right| \leq
$$
$$
\leq K \theta^{|i-j|}\left|P_{j+1}(\tilde{r}_{j} \mathcal{P}^{j} 1)\right|_{BV}\left\|\tilde{r}_{i}\right\|_{1} \leq D^{\prime} \theta^{|i-j|},
$$
where in the last line we used  \eqref{dec} and the estimate 
\begin{equation}\label{intr-1}
|P_{j+1}(\tilde{r}_{j} \mathcal{P}^{j} \mathbf{1})|_{BV}\leq |P_{j+1}(r_{j} \mathcal{P}^{j} \mathbf{1})|_{BV} +\left|\int_\mathbb{T}r_j(G_{1,j})dx\right||\mathcal{P}^{j+1} \mathbf{1}|_{BV}
\end{equation}
$$
\leq L |\mathcal{P}^j\mathbf{1}|_{BV} + \|r_j\|_1 \|\mathcal{P}^j \mathbf{1}\|_\infty |\mathcal{P}^{j+1} \mathbf{1}|_{BV}\leq L M + L M^2,
$$
which relies on the fact that $\|P_{j+1}(r_j)\|_1=\|r_j\|_1\leq L$.
Therefore 
$$
\Big|\sum_{i,j \leq n; |i-j|\geq N}\int_\mathbb{T} \tilde{r}_i(G_{1,i}x)\tilde{r}_{j} (G_{1,{j}}x)dx\Big|\leq 
\sum_{N \leq i\leq n}(n-i)D' \theta^{i}
\leq 
D'\sum_{N \leq i\leq n}n\theta^{i}
\leq D' n \frac{\theta^N}{1-\theta}.
$$
In a similar way for $\bar \sigma_n$ we will have
\begin{equation}\label{large}
\left|2 \sum_{k=1}^n (n-k)\int_\mathbb{T} \tilde{r} (x)\tilde{r} (G^k(x))h(x)dx\right| \leq D^{\prime} n \frac{\theta^{N}}{1-\theta}.
\end{equation}
Next, we take $N$ so large that
\begin{equation}\label{large1}
D^{\prime} n \frac{\theta^{N}}{1-\theta}\leq n\frac{C}{4},
\end{equation}
where $C$ is from \eqref{vrns}.
We now consider the terms with $|i-j|<N$ and show that for arbitrary $\varepsilon>0$, $\delta_0$ can be taken so small that the following bound holds
\begin{equation}\label{dist-var}
\Big|\int_\mathbb{T} \tilde{r}_{i} P_{i} \cdots P_{j+1}(\tilde{\tau}_{j} \mathcal{P}^{j} 1) d x-\int_\mathbb{T} \tilde{r}(x) P^{|i-j|}(\tilde{r}h) dx\Big|\leq C_0 \varepsilon,
\end{equation}
for some $C_0>0$.
For this it is enough to show that for arbitrary $\varepsilon>0$, $\delta_0$ can be taken so small that if $\delta\leq  \delta_0$ then
\begin{equation}\label{exp-1}
\|\tilde{r}_{i}-\tilde{r}\|_2\leq \varepsilon,
\end{equation}
and
\begin{equation}\label{exp-2}
\left\|P_{i} \cdots P_{j+1}(\tilde{r}_{j} \mathcal{P}^{j} \mathbf{1})-P^{|i-j|}(\tilde{r}h)\right\|_2 \leq \varepsilon.
\end{equation}

By Lemma 2.13 of \cite{CR07}, for any $p\leq n$ we have that
\begin{equation}
\left\|\mathcal{P}^{n} \mathbf{1}-P^{n} \mathbf{1}\right\|_1 \leq  C'\left(p \delta_0+\left(1-\theta\right)^{-1} \theta^{p}\right).
\end{equation}
Taking $p=[\frac{1}{\sqrt{\delta_0}}]+1$, we see that for small $\delta_0$
\begin{equation}\label{min-3}
\left\|\mathcal{P}^{n} \mathbf{1}-P^{n} \mathbf{1}\right\|_1 \leq  C'\sqrt{\delta_0}.
\end{equation}
Since, $P^{n} \mathbf{1} \rightarrow_{L^1} h$, as $n\to\infty$, then for $n$ sufficiently large $\left\|\mathcal{P}^{n} \mathbf{1}-h\right\|_1 \leq  2C'\sqrt{\delta_0}$. Thus
$$
\left\|\mathcal{P}^{n} \mathbf{1}-h\right\|_2 \leq \sqrt{2M\left\|\mathcal{P}^{n} \mathbf{1}-h\right\|_1} \leq C_1\delta_0^\frac{1}{4}.
$$
Next, observe that
\begin{equation}\label{mean-dist}
\Big|\int_\mathbb{T} (r_k(G_{1,k})-r(G^k))dx \Big|=\Big|\int_\mathbb{T} (\mathcal{P}^k \mathbf{1}r_k-P^k \mathbf{1} r	) dx \Big|
\end{equation}
$$
\leq \Big|\int_\mathbb{T} \mathcal{P}^n \mathbf{1}(r_k-r) +  (\mathcal{P}^n \mathbf{1}-P^n \mathbf{1}) r dx \Big|
$$
$$
\leq M \|r_k-r\|_1 +  \|\mathcal{P}^n \mathbf{1}-P^n \mathbf{1}\|_2 \|r\|_2 \leq M \delta_0 + \|r\|_2 L^{\frac{1}{2}}\delta_0^{\frac{1}{4}}. 
$$
It then follows that for all $n \geq 1$ and $\delta,\varepsilon$ small we will have
\begin{equation}\label{sddd}
\|\tilde{r}-\tilde{r}_n \|_2 \leq M \delta_0 + \|r\|_2 L^{\frac{1}{2}}\delta_0^{\frac{1}{4}} + \|r-r_n\|_2
\leq (M+1)\delta_0 + \|\brr\|_2 L^{\frac{1}{2}}\delta_0^{\frac{1}{4}},
\end{equation}
since by assumption $\|r-r_n\|_2\leq \delta_0$.
Thus we obtain \eqref{exp-1}.

To show\eqref{exp-2}, observe that by the triangle inequality
$$
\Big\|P^{|i-j+1|}\Big(\tilde{r}h\Big) -  P_{i} \cdots P_{j+1}\left(\tilde{r}_j \mathcal{P}^{j} 1\right)  \Big\|_{2} \leq \Big\| P^{|i-j|}\Big(P \Big(\tilde{r}h\Big)\Big)-P^{|i-j-1|} \left(P_{j+1} (\tilde{r}_j \mathcal{P}^{j} 1)\right) \Big\|_{2}
$$
\begin{equation}\label{intrmd}
+\Big\|P^{|i-j|} \left(P_{j+1} (\tilde{r}_j \mathcal{P}^{j} 1)\right)-P_{i} \cdots P_j(P_{j+1}\left( \tilde{\tau}_j\mathcal{P}^{j} 1\right)) \Big\|_{2} =I+\RmII.
\end{equation}
By Lemma 2.4 of \cite{CR07} we have that
$$
d_2(P_1\dots P_n, P^n)\leq \sum_{k=1}^n d_2(P_k, P).
$$
Hence, by assumptions of the Proposition and in view of \eqref{intr-1}
$$
I \leq  d_2\left(P_{i} \cdots P_{j}, P^{|i-j-1|}\right)\left| P_{j+1}(\tilde{\tau}_j\mathcal{P}^{j} 1)\right|_{BV} 
\leq (LM + L M^2) \sum_{k=1}^{|i-j|} d_2\left(P_{k}, P\right)\leq NC_3 \delta_0.
$$
For $\RmII$ we have by Lemma \ref{l2-norm} and the assumptions of our proposition that
$$
\RmII \leq  \Big\| P^{|i-j-1|} \Big(P_{j+1} (\tilde{r}_j \mathcal{P}^{j} 1)-(P (\tilde{r}h)\Big)\Big\|_{2}.
$$
$$
\leq \|P \mathbf{1}\|_\infty^{|i-j-1|/2}\Big\|P_{j+1} (\tilde{r}_j \mathcal{P}^{j} 1)-P (\tilde{r}h)\Big\|_2 \leq C_4 M^{|i-j-1|/2}\delta_0.
$$
Taking $\delta_0$ small enough we arrive at \eqref{exp-2}.
Combining \eqref{exp-1} and \eqref{exp-2} we get \eqref{dist-var}. 
Summing \eqref{dist-var} for all $|i-j|\leq N$ we get
$$
\Big|\sum_{i,j\leq n, |i-j|\leq N}\int_\mathbb{T} \tilde{\tau}_{i} (G_{1,{i}}(x) )\tilde{\tau}_{j} (G_{1,{j}}(x))dx
-\sum_{k=1}^N \Big((n-k)\int_\mathbb{T}f(x)f(G^k(x))dx\Big)\Big|
$$
\begin{equation}\label{i-j}
\leq  C'Nn \varepsilon+ C''N \varepsilon.
\end{equation}
Thus by \eqref{i-j}, \eqref{large} and \eqref{large1} we can write 
$$
|\sigma_n^2 -  \bar \sigma_n^2|\leq n\frac{2C}{4} + C'Nn \varepsilon+ C''N \varepsilon.
$$
Therefore
$$
\sigma_n^2 \geq  Cn - 2\frac{C}{4}n-C'Nn \varepsilon- C''N \varepsilon
\geq \Big( \frac{C}{2}- C'N\varepsilon \Big)n  - C''N \varepsilon\geq C_1 n
$$
 if $\varepsilon$ is small enough. This finishes the proof.
\end{proof}

\begin{lemma}\label{cob}
Let $\brG$ be an expanding map satisfying Hypothesis \ref{hyp} so that for some $x_0 \in \mathbb{T}$ we have $\brG(x_0)=x_0$  and $\brG$ is continuous at a neighborhood of $x_0$. Assume for $\brr \in L^2$ we have $\brr = C$ in an open neighborhood of $x_0$ and $C \neq \int_\mathbb{T} \brr hdx$. Assume further that $\brP \brr \in BV$. Then $\bar{\tau}$ is not cohomologous to a constant under $\brG$. 
\end{lemma}
\begin{proof}
Assume $\brr$ is a coboundary for $\brG$. Since $\brP\brr \in BV$, then 
by Proposition \ref{A-2} there exists $g \in BV$, such that the equality
$$
\brr(x)-\int_\mathbb{T}\brr(x)h(x)dx =g(x)-g(\brG(x))
$$
holds almost surely. Let $A'\subset \mathbb{T}$ be the set of all $x \in A'$ for which the equation above holds for all the forward and backward images of $x$ under $\brG$. Clearly $|A'|=1$.

By assumption $\brG(x_0)=x_0$. Take $x \in A'$. Then 
$$
\sum_{k=0}^n\brr(\brG^k(x))-n\int_\mathbb{T}\brr(x)h(x)dx =g(x)-g(\brG^n(x)).
$$
Observe that $\brr(x)=\brr(x_0)$ for $x$ sufficiently close to $x_0$. Hence
\begin{equation}\label{cb}
|n\brr(x) - n\int_\mathbb{T}\brr(x)h(x)dx|
=n|C-\int_\mathbb{T}\brr(x)h(x)dx|\leq  2 \|g\|_\infty.
\end{equation}
By assumption $C-\int_\mathbb{T}\brr(x)h(x)dx\neq 0$
Hence, for large $n$, \eqref{cb} can not take place.
This finishes the proof.
\end{proof}

\section{Proof of the main results for Model A.}
\subsection{Proof of Theorem \ref{ThHit}(a).} 
\label{mod-b}

By assumption
$$
\brT^p \brW \cap \brW=\emptyset \text{ for }p=1, 2.
$$

Recall that $T_n=\brT+h_n$, where $h_n \in C^{2}(\mathbb{T})$, $\|h_n\|_{C^{2}}<\delta_0$ and $| \brW \triangle W_n|<\delta_0$. { By \eqref{balist}}, for $\delta_0$ sufficiently small we will have
$$
T_n(W_n)\cap W_{n-1}=\emptyset, \quad T_{n-1}(T_n(W_n))\cap W_{n}=\emptyset.
$$
This implies that
\begin{equation}\label{maps}
G_n(x) = \begin{cases}
T_{n}(T_{n-1}(T_{n}(x))) &\text{if}\quad x \in W_{n}, \\
T_{n}(x)&\text{if}\quad x \in \mathbb{T}\setminus W_{n}.
\end{cases}
\end{equation}
Hence the hitting times are
\begin{equation}\label{times}
r_{n}(x)=\begin{cases} 
3  & \text{if}\quad x \in W_{n}, \\
1 & \text{if}\quad \mbox{ for }x \in \mathbb{T}\setminus W_{n}.\\
\end{cases}
\end{equation}

Let $\brr$ and $\brG$ respectively be 
$$
\brr(x)=\begin{cases} 
3 &\text{if}\quad x \in \brW, \\
1 & \text{if}\quad x \in \mathbb{T}\setminus\brW;\end{cases} \quad
\brG(x) = \begin{cases}
\brT(\brT(\brT(x)) &\text{if}\quad x \in \brW, \\
\brT(x)&\text{if}\quad x \notin \mathbb{T}\setminus \brW.\\
\end{cases}
$$

We denote by $P_n$ and $\brP$ the transfer operators of $G_n$ and $\brG$ respectively.

To prove Theorem \ref{ThHit}(a)  we will verify that the collection $\{P_n\}$ satisfies the conditions of Theorem \ref{CRThm}.

Next, we  show that the sequence $\{P_n\}$ satisfies  \eqref{dec} 
if $\delta_0$ is sufficiently small. 
Note that 
\eqref{dec} for the unperturbed map $\brP$ follows from \Cref{sptgp}(b). 
Applying Proposition \ref{nbrhd-sc} to $\brP$, we can find a neighborhood of $\brP$, where \eqref{dec} property is preserved. Hence, to establish \eqref{dec} for the collection $\{P_n\}_{n \geq 1}$  for $\delta_0$ small, it suffices to show that the norms 
$d_1(P_n, \brP)$ are small when $\delta_0$ is small. 
Thus \eqref{dec} is a consequence of the following result whose proof will be given in \S \ref{SSModAd-dist}.

\begin{lemma}\label{d-dst}
For $\delta_0$ sufficiently small there exists $L>0$ such that
$$
\|P_nf - \brP f\|_1\leq L\delta_0 |f|_{BV}, \quad {\forall f} \in BV.
$$
\end{lemma}

The \eqref{Min} condition for $\mathcal{P}^n \mathbf{1}$ follows from Lemma \ref{min-1}. Thus, we have established \eqref{dec} and \eqref{Min} for the sequence $\{P_n\}$. 

Note that for Model A, 
$1<\int_\mathbb{T}\brr hdx<3$. 
Hence by Lemma \ref{cob}, $\brr$ is not cohomologous to a constant for $\brG$.
Thus, Proposition \ref{lm-1} 
 gives the linear growth of the variance for the sequence 
$$ \tau_n(x)=\sum_{k=0}^{n-1} r_k(G_{k-1}\circ\dots \circ G_0 x)$$
if $\delta_0$ sufficiently small.
Now Theorem \ref{ThHit}(a)
 follows from  Theorem \ref{CRThm}. \qed

\subsection{Proof of Lemma \ref{d-dst}}
\label{SSModAd-dist}
We recall the following fact from \cite{CR07}. Let
\begin{align} 
\widetilde{w}(f, t)=\int_{0}^{1} \sup _{|y-x| \leq t}|f(y)-f(x)| dx. 
\end{align}
Then
\begin{equation}\label{var-bd}
\widetilde{w}(f, t) \leq 2 t V(f).
\end{equation}
As earlier, we need to estimate the norms
$$
\|P_n f-\brP f\|_1=\int_\mathbb{T}|P_n f-\brP f|dx.
$$
For $x \in W_n \cap \brW$ we have that
$$
G_n(x)=T_n(T_{n-1}(T_n(x)))=\brT(\brT(\brT(x)))) + g_n(x),
$$
where $\|g_n\|_{C^{1+\lip}({W}_n\cap \brW)}<C\delta_0$. Hence, away from a set $A \subset \mathbb{T}$ of measure $O(\delta_0)$ we have $\|\brG - G_n\|_{C^{1 + \lip}(\mathbb{T}\setminus A)}<L\delta_0$, $\forall n \in \mathbb{Z}$.
As both $\brG$ and $G_n$ are continuous everywhere away from the endpoints 
of the intervals $\bar{W}$ and $W_n$, 
{then there is a set $B$ of measure $O(\delta_0)$ such that for each $x$ outside of $B$}
 for every preimage $y_1$, with $\brG^{-1}(x)=y_1$, there is a preimage $y_2$, $G_n^{-1}(x)=y_2$ close to 
 $y_1$. 
 Since $|G'_n|\leq K^3$, then for each $x$ there are at most $[K^3]+1$ many inverse branches of $G_n.$
One can also see that $|y_1-y_2|\leq L\delta_0/\gamma$. We now write
\begin{align*} \label{mm}
P_{G_n} f(x)-P_{\brG} f(x)&=E_0(x)  + \sum_{y_1:G_n(y_1)=x} \frac{f(y_1)}{\brG'(y_1)}-\sum_{y_2:\brG(y_2)=x} \frac{f(y_2)}{G'_n(y_2)}\\
&=E_0(x) + \sum_{y_1,y_2}\left[\frac{f(y_1)}{\brG'(y_1)}-\frac{f(y_2)}{\brG'(y_1)}\right] + \sum_{y_1,y_2}\left[\frac{f(y_2)}{\brG'(y_1)}-\frac{f(y_2)}{G'_n(y_2)}\right]
\end{align*}
where $E_0$ is supported on $B.$
In particular $\|E_0\|_1\leq C \delta_0$.
Next,
\begin{equation}\label{intr}
\left|\frac{1}{\brG^{\prime}(y_1)}-\frac{1}{G_n^{\prime}(y_2)}\right|\leq 
\left|\frac{K_1(y_1-y_2)}{\brG^{\prime}(y_1)\brG^{\prime}(y_2)}\right| + \left|\frac{1}{\brG^{\prime}(y_2)}-\frac{1}{G_n^{\prime}(y_2)}\right|
\leq L\left(\frac{K_1 \delta_0}{\gamma^3} + \frac{\delta_0}{\gamma^2}\right).
\end{equation}
Note that we have $K_1<\infty$. By the triangle inequality
$$
\int_{\mathbb{T}\setminus B}\left|\frac{f(y_1)}{\brG'(y_1)}-\frac{f(y_2)}{G'_n(y_2)}\right|dx\leq  \int_{\mathbb{T}\setminus B}\left|\frac{f(y_1)}{\brG'(y_1)}-\frac{f(y_2)}{\brG'(y_1)}\right|dx
+ \int_{\mathbb{T}\setminus B}\left|\frac{f(y_2)}{\brG'(y_1)}-\frac{f(y_2)}{G'_n(y_2)}\right|dx.
$$
For the first term on the right, we have by \eqref{var-bd}
\begin{align*}
\int_{\mathbb{T}\setminus B}\left|\frac{f(y_1)}{\brG'(y_1)}-\frac{f(y_2)}{\brG'(y_1)}\right|dx &\leq \frac{1}{\gamma}\int_{\mathbb{T}\setminus B}|f(y_1)-f(y_2)|dx \\
&\leq \frac{2}{\gamma} \sup|y_1-y_2|V_{\mathbb{T}}(f)\leq \frac{2L\delta_0}{\gamma^2}V_{\mathbb{T}}(f).
\end{align*}
By \eqref{intr}
$$
\int_{\mathbb{T}\setminus B}\left|\frac{f(y_2)}{\brG'(y_1)}-\frac{f(y_2)}{G'_n(y_2)}\right|dx\leq \|f\|_\infty \delta_0 L\left(\frac{K_1}{\gamma^3}+\frac{1}{\gamma^2}\right).
$$
Since the measure of  $B$ is of order $O(\delta_0)$, we also have
$$
\int_B |P_nf - \brP f| dx\leq L_1 \|f\|_\infty |B|\leq L_2\delta_0 \|f\|_\infty.
$$
Recall that $\|f\|_\infty \leq |f|_v$. Summarizing the estimates above, we finally obtain\\
$\DS 
\|P_nf - \brP f\|_1\leq L'\delta_0|f|_v.
$
\qed
 
\subsection{Quenched drift and variance.}
For $k<m$, let 
$$ \fa_{m, k}=\int_\mathbb{T}[  r_m(G_{m-1} \circ \dots \circ G_{m-k} (x))dx].$$ 
For $k=m-1$ we set $\fa_m=\int_\T r_m(G_{1,{m-1}} x)dx$.

 The properties of $\fa_m$ are summarized below.

\begin{lemma}
\label{LmQExp}
There are constants $C_1, C_2, C_3 >0 $ $0< \theta_1, \theta_2, \theta_3<1$ such that

(a) For each $k$ the sequence 
$m\to \fa_{m,k}$ is stationary and $\left| \fa_{m, k}-\fa_m\right|<C_1 \theta_1^k.$

(b) There exists the limit $\DS \ba=\lim_{m\to\infty} \bE(\fa_m)$ and moreover 
$\DS \left|\bE(\fa_m)-\ba\right|\leq C_2 \theta_2^m.$

(c) {\rm Cov}$(\fa_{n_1}, \fa_{n_2})\leq C_3 \theta_3^{|n_2-n_1|}. $

(d) There exists $D^2\geq 0$ such that 
$\DS \frac{\left[\int_\T \tau_m(x) dx\right]-m \ba}{\sqrt m} \Rightarrow \cN(0, D^2)$ as $m\to\infty.$

(e) For each $\eps>0$ there exists $C(\omega)$ such that for each 
$n_1, n_2<10 N$ such that $|n_2-n_1|\leq N^{3/4}$ we have
$$\Big|\int_\mathbb{T}\tau_{n_2}dx-\int_\mathbb{T}\tau_{n_1}dx-\ba (n_2-n_1)\Big|\leq C(\omega) N^{3/8+\eps}. $$
\end{lemma}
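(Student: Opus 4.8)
The plan is to build each part on the exponential-mixing machinery already in place. For part (a), recall from Lemma \ref{rand-sel} and the surrounding discussion that $G_{1,m-1}$ can be replaced by a truncated composition: define $\fa_{m,k}=\int_\T f_m(x)\, h_{m-1,k}(x)\,dx$ where $h_{m-1,k}=P_{m-1}P_{m-2}\cdots P_{m-1-k}\mathbf 1$ as in Section \ref{ScDRWRE-Var}. This is manifestly $\cF_{m-1-k,m}$-measurable, hence $\cF_{m-k,m}$-measurable after renaming, and stationary in $m$ because it depends only on the block $(T_{m-1-k},W_{m-1-k}),\dots,(T_m,W_m)$ of i.i.d. environment. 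The estimate $|\fa_{m,k}-\fa_m|\le C_1\theta_1^k$ follows from $\fa_m=\int f_m\,h_{m-1}\,dx$ together with the bound $\|h_{m-1}-h_{m-1,k}\|_v\le D\vartheta^k$ proved in Section \ref{ScDRWRE-Var} and $\|f_m\|_1\le|f_m|_v\le\const$.

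For (b), write $\bE(\fa_m)=\bE\big(\int f_m\,h_{m-1}\,dx\big)$ and compare with $\bE(\fa_{m,k})$ using (a): $|\bE(\fa_m)-\bE(\fa_{m,k})|\le C_1\theta_1^k$. Since $\bE(\fa_{m,k})$ is independent of $m$ by stationarity, call it $\ba_k$; then $|\ba_k-\ba_{k'}|\le C_1(\theta_1^k+\theta_1^{k'})$, so $(\ba_k)$ is Cauchy with limit $\ba$, and $|\bE(\fa_m)-\ba|\le 2C_1\theta_1^k$ for every $k$; optimizing gives any $\theta_2$ with $\theta_1<\theta_2<1$ — actually one gets $|\bE(\fa_m)-\ba|=0$ once we notice $\bE(\fa_m)=\ba$ exactly, but since $h_{m-1}$ genuinely depends on $T_n$ for $n\le-1$ only through an exponentially small tail, the stated bound $C_2\theta_2^m$ is what survives if we instead truncate at depth $k=m$ (using $h_{m-1,m}$, which only sees environment indices $\ge -1$). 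For (c), use (a) to replace $\fa_{n_1},\fa_{n_2}$ by $\fa_{n_1,k},\fa_{n_2,k}$ with $k=\lfloor|n_2-n_1|/2\rfloor$: the two truncated variables then depend on disjoint blocks of the i.i.d. environment, hence are independent, so $\mathrm{Cov}(\fa_{n_1,k},\fa_{n_2,k})=0$, and the replacement error is $O(\theta_1^k)=O(\theta_1^{|n_2-n_1|/2})$, giving (c) with $\theta_3=\sqrt{\theta_1}$.

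Part (d) is the CLT for the deterministic (environment-averaged) centering sequence $\sum_{m=1}^M\fa_m$. By (a) the sequence $\fa_m$ is, up to exponentially small errors, a stationary sequence that is a factor of an i.i.d. process with exponentially decaying dependence — more precisely $\fa_{m,k}$ is a function of the block $(T_{m-k},W_{m-k}),\dots,(T_m,W_m)$, so $(\fa_{m,k})_m$ is $k$-dependent for each fixed $k$. I would apply the standard CLT for i.i.d.-based stationary sequences with summable correlations (e.g. via the $k$-dependent approximation and the classical $k$-dependent CLT, letting $k\to\infty$), noting $D^2=\sum_{j\in\Z}\mathrm{Cov}(\fa_0,\fa_j)$ converges by (c) and $D^2\ge0$ automatically; the truncation tails $\sum_m|\fa_m-\fa_{m,k}|\le\sum_m C_1\theta_1^k$ contribute $O(M\theta_1^k)$, negligible after dividing by $\sqrt M$ once $k=k(M)\to\infty$ slowly. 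Finally (e) is a maximal/large-deviation estimate: since $\tau_{n_2}-\tau_{n_1}-\ba(n_2-n_1)=\sum_{m=n_1+1}^{n_2}(\fa_m-\ba)+\text{(zero-mean fluctuation part)}$, and the fluctuation part plus the centered $\fa$-sum has variance $O(|n_2-n_1|)$ by (c), Kolmogorov's maximal inequality (or a Rosenthal-type bound) over the $O(N)$ choices of $n_1,n_2$ in a window of length $\le N^{3/4}$ gives, by Borel–Cantelli, an almost-sure bound $C(\omega)N^{3/8+\eps}$; the exponent $3/8$ is exactly $\tfrac12\cdot\tfrac34$, the square root of the window length. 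The main obstacle is part (e): one must be careful that $\EXP(\tau_{n})$ here is the expectation over $x$ only (a random variable in $\omega$), that its increments are genuinely a sum of weakly dependent terms with the right variance scaling, and that the union bound over pairs $(n_1,n_2)$ together with Borel–Cantelli produces a single almost-sure constant $C(\omega)$ valid for all large $N$ — this is where the choice of the exponent $3/8+\eps$ rather than $3/8$ is forced.
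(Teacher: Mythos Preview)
Your plan matches the paper's proof: define $\fa_{m,k}$ as the $k$-step truncation of $\fa_m$, get (a) from \eqref{dec}, get (b) and (c) from stationarity of the truncations and independence of disjoint environment blocks, and then appeal to standard limit theorems for stationary sequences with summable correlations---the paper simply cites Ibragimov--Linnik for (d) and Gaposhkin \cite{Ga82} for (e), whereas you sketch the $k$-dependent approximation and the maximal-inequality/Borel--Cantelli route. Two small slips to clean up: by definition $\fa_m=\int f_m\,(\cP^{m-1}\mathbf 1)\,dx$, not $\int f_m\,h_{m-1}\,dx$ (they differ by $O(\theta^{m})$, so your bound in (a) survives unchanged), and in (e) there is no ``zero-mean fluctuation part''---$\EXP(\tau_{n_2})-\EXP(\tau_{n_1})$ is exactly the sum $\sum_m \fa_m$ over the relevant range.
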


\begin{remark}
{Note that we do not claim that $D$ in part (d) is not equal to zero.}
\end{remark}

\begin{proof}
(a) Note that for $m>k$
$$
\Big|\int_\T f_m(G_{m-1} \circ \dots \circ G_{m-k} x) dx-
\int_\T f_m(G_{m-1} \circ \dots \circ G_{1} x) dx \Big|
$$
$$
\leq \int_\T  f_m |P_m\dots P_{m-k}[ \mathbf{1} -P_{m-k-1}\dots P_{1} \mathbf{1}]|dx 
\leq C_1 \theta^k,
$$
where the last estimate is due to the exponential mixing condition \eqref{dec}.

(b) By part (a)
\begin{equation}\label{cau}
\DS \bE(\fa_m)=\bE(\fa_{m,k})+O(\theta_1^k)=\bE(\fa_{k,0})+O(\theta_1^k).
\end{equation} 
Hence, $|\bE(\fa_n)- \bE(\fa_m)|< C \theta^k_1$, for $n>m>k$, which shows that the sequence $\{\bE(\fa_n)\}_{n \geq 1}$ is a Cauchy sequence. Thus, we have the limit $\DS \ba=\lim_{m\to\infty} \bE(\fa_m)$. Next, by letting $m \rightarrow \infty$ in \eqref{cau} we get (b).

(c) Assume $n_1>n_2$. By (b) we can write $|\fa_{n_1}- \fa_{n_1, n_1-n_2}| \leq C_1\theta^{|n_2-n_1|}$. Hence
$$
\bE[(\fa_{n_1}-\bE[\fa_{n_1}]) (\fa_{n_2}-\bE[ \fa_{n_2}])]= \bE[(\fa_{n_1,n_1-n_2}-\bE[\fa_{n_1}]) (\fa_{n_2}-\bE[ \fa_{n_2}])]+C' \theta^{|n_1-n_2|}
$$
$$
=C' \theta^{|n_1-n_2|}
$$
where the last equality is due to that fact that $\fa_{n_1,n_1-n_2}$ and $\fa_{n_2}$ are independent random variables and  $\bE[ (\fa_{n_2}-\bE[ \fa_{n_2}])]=0$.

(d) follows from (c), see \cite[Chapter XVIII]{I-L}.

 (e) also follows from (c) as is shown in \cite{Ga82}.
\end{proof}
We also need the following result
\begin{lemma}[\cite{K98}]\label{vrnce}
There is a constant $\bsigma$ such that
$\DS \lim_{n\to\infty} \frac{\var(\tau_n)}{n}=\bsigma^2$ with probability~1.
\end{lemma}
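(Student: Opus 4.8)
The plan is to deduce the almost sure convergence of $\var(\tau_n)/n$ from Birkhoff's ergodic theorem, reusing the martingale--coboundary machinery built in the proof of Lemma~\ref{rand-sel}. Recall that there, with $\tilde f_k=\tau_k-\int\tau_k(G_{1,k}y)\,dy$, one has the decomposition
$$\sum_{k=0}^{n-1}\tilde f_k\circ G_{1,k}=\sum_{k=0}^{n-1}U_k+\mathbf{H}_n\circ G_{1,n},\qquad U_k=\psi_k\circ G_{1,k},$$
where $\{U_k\}$ is a reverse martingale difference sequence for the filtration $(\cA_n)$, and, since $\tilde f_k$ and $\mathbf{H}_k$ are uniformly bounded in $\mathcal V$ with constants depending only on the finite collection $E$ and on $\delta_0\le\brdelta_0$, one has $\sup_n\|\mathbf{H}_n\circ G_{1,n}\|_2\le C$ and $\sup_k\|\psi_k\|_\infty\le C$. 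Since $\var(\tau_n)=\big\|\sum_{k=0}^{n-1}\tilde f_k\circ G_{1,k}\big\|_2^2$ and the $U_k$ are orthogonal, the first step is the elementary bound
$$\Big|\sqrt{\var(\tau_n)}-\Big(\sum_{k=0}^{n-1}\textstyle\int U_k^2\,dx\Big)^{1/2}\Big|\le\|\mathbf{H}_n\circ G_{1,n}\|_2\le C,$$
which, together with $\sum_{k=0}^{n-1}\int U_k^2\,dx=\sum_{k=0}^{n-1}\int\psi_k^2\,\mathcal{P}^k\mathbf{1}\,dx\le Cn$, gives $\tfrac{\var(\tau_n)}{n}-\tfrac1n\sum_{k=0}^{n-1}\int U_k^2\,dx\to0$.

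Next I would pass from the non-stationary quantities $\int U_k^2\,dx=\int\psi_k^2\,\mathcal{P}^k\mathbf{1}\,dx$ to the stationary ones $\widehat V_k:=\int\widehat\psi_k^2\,h_k\,dx$ introduced in Lemma~\ref{rand-sel}, where $h_k=\lim_j P_kP_{k-1}\cdots P_{k-j+1}\mathbf{1}$ and $\widehat\psi_k=\hat f_k+\widehat{\mathbf{H}}_k-\widehat{\mathbf{H}}_{k+1}\circ G_{k+1}$. Using the bounds established there --- the exponential smallness of $\|\psi_k-\widehat\psi_k\|_2$ in $k$, and $\|\mathcal{P}^k\mathbf{1}-h_k\|_v\le D\vartheta^k$ --- together with the uniform $L^\infty$ bounds on $\psi_k,\widehat\psi_k$ and on $\mathcal{P}^k\mathbf{1}$, one obtains $\big|\int U_k^2\,dx-\widehat V_k\big|\le C'\theta_1^{\,k}$ with $\theta_1<1$ and constants independent of the environment. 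Hence the Ces\`aro averages $\tfrac1n\sum_{k<n}\int U_k^2\,dx$ and $\tfrac1n\sum_{k<n}\widehat V_k$ have the same limit whenever one exists, and it remains to evaluate the latter.

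The last step is to observe that $\widehat\psi_k$ and $h_k$ depend only on the pairs $\{(T_m,W_m)\}_{m\le k+1}$, so $\widehat V_k$ is the value of a fixed bounded measurable function $\Phi$ evaluated at the environment shifted by $k$; since the pairs $(T_n,W_n)$ are chosen i.i.d.\ from $E$, the shift is ergodic and Birkhoff's theorem yields $\tfrac1n\sum_{k<n}\widehat V_k\to\bE(\widehat V_0)=:\bsigma^2$ almost surely. Combining the three steps gives $\var(\tau_n)/n\to\bsigma^2$ a.s.; the positivity of $\bsigma^2$, while not needed for the statement, follows from \eqref{MargVar}.

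I expect the main obstacle to be bookkeeping rather than ideas: one must verify that every error estimate (the exponential decay of $\|\psi_k-\widehat\psi_k\|_2$ and of $\|\mathcal{P}^k\mathbf{1}-h_k\|_v$, the $\mathcal V$-bounds on $\mathbf{H}_n$ and $\widehat{\mathbf{H}}_n$) holds with constants uniform over the finitely many admissible pairs in $E$ and over $\delta_0\le\brdelta_0$, so that the resulting convergence is genuinely almost sure in the environment; and one must check that $\widehat V_k$ is adapted to a shift-stationary structure, which is automatic here since the environment is a two-sided i.i.d.\ sequence. Alternatively, once exponential mixing \eqref{dec} and \eqref{Min} are in hand, the statement also follows from the general limit theorems of \cite{K98} applied to the induced cocycle $(G_n,\tau_n)$.
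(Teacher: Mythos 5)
The paper itself gives no proof of this lemma: it is stated with a bare citation to \cite{K98}. Your proposal supplies a self-contained argument by extending the machinery that the paper builds in the proof of Lemma~\ref{rand-sel}, so you are taking a genuinely different (and more explicit) route than the text. Your argument is sound: (i) the passage from $\var(\tau_n)$ to $\sum_k\int U_k^2\,dx$ via orthogonality of the reverse-martingale increments and the uniform bound on $\mathbf{H}_n\circ G_{1,n}$, (ii) the exponential approximation $|\int U_k^2\,dx-\widehat V_k|\le C'\theta_1^{\,k}$ using $\|\psi_k-\widehat\psi_k\|_2\le C\theta^k$ and $|h_{k}-\mathcal P^k\mathbf{1}|_v\le D\vartheta^k$, and (iii) Birkhoff for the shift-stationary process $\widehat V_k$, are exactly the three ingredients one needs, and all three are available from the proof of Lemma~\ref{rand-sel}. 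What your approach buys is transparency: it makes visible that the almost-sure existence and environment-independence of $\bsigma^2$ are consequences of the same martingale--coboundary decomposition and the same stationary objects $h_k$, $\widehat{\mathbf H}_k$, $\widehat\psi_k$ that already appear in the paper, so the reader does not need to open \cite{K98} and check that its hypotheses match the present setting. A couple of small points worth tightening if you were to write this up formally: the orthogonality argument requires noting that the filtration $(\mathcal A_n)$ is the one generated by $G_1,\dots,G_n$ (not the $T$'s), and that $\EXP[U_j U_k]=0$ for $j>k$ because $U_j$ is $\mathcal A_j\subset\mathcal A_{k+1}$-measurable and $\EXP[U_k\mid\mathcal A_{k+1}]=0$; and the step $|\int U_k^2-\widehat V_k|\le C'\theta_1^k$ uses the uniform $L^\infty$ control $\|\psi_k\|_\infty,\|\widehat\psi_k\|_\infty\le C$, which follows from $\|\cdot\|_\infty\le|\cdot|_v$ and the uniform BV bounds on $\mathbf H_n$, $\widehat{\mathbf H}_n$ --- worth stating explicitly. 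With those spelled out, this is a complete and correct alternative to the citation.
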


\subsection{Proof of Theorem \ref{main} for Model A}

Define $\cS(n)=\EXP(\tau_n)$.
This function is monotone so we consider
an inverse function
\begin{equation} 
\label{DefNScale}
\cZ(s)\!=\!\max(x\!: \cS(n)\!\leq\! s). 
\end{equation}
Denote 
$$\brsigma_n=\sqrt{\var(\tau_{\cZ(n)})}, \quad z_n^{*}=\max_{0\leq k\leq n}z_k, \quad b_n=\cZ(n)$$

{
\begin{lemma}
\label{LmLin}
$\cS(n)$, $b_n$, and $\sigma_n^2$ have linear growth. That is there is a constant $C$ such that
$$ \frac{1}{C} \leq \frac{\cS(n)}{n}\leq C, \quad
\frac{1}{C} \leq \frac{b_n}{n}\leq C, \quad
\frac{1}{C} \leq \frac{\brsigma_n^2}{n}\leq C.$$
\end{lemma}

\begin{proof}
Since $n\leq \tau_n\leq 3n$ we have $n\leq \cS(n)\leq 3n.$ Therefore
$n/3\leq b_n\leq n.$

The lower bound on $\brsigma_n^2$ follows from \Cref{lm-1}, see the proof of  \Cref{ThHit}.
The upper bound on $\brsigma_n^2$ follows from \eqref{dec} since

$\DS \brsigma_n^2=\sum_{n_1, n_2\leq n} \mathrm{Cov}(r_{n_1}, r_{n_2}) \leq
\sum_{n_1, n_2\leq n} C_1 \theta^{|n_2-n_1|} \leq C_2 n. $
\end{proof}
}
Let $\ba$ be as in Lemma \ref{LmQExp} and consider 
$$
P\left(\frac{z_n^*- b_n}{
(1/\ba)\brsigma_n}>t\Big)=P\Big(z_n^*>b_n + (1/\ba) t\brsigma_n\right).
$$
By definition of $z_n^*$ for every $t\in \mathbb{R}$ we have
$$
P(z_n^*>t)=P(\tau_{[t]}<n).
$$
Hence
$$
P\left(z_n^*>b_n + u\brsigma_n\right)=P(\tau_{[b_n + u\brsigma_n ]}<n)
=
P\left(\frac{\tau_{[b_n + u\brsigma_n]}- \EXP[\tau_{[b_n +u\brsigma_n]}]}{\sqrt{\var[\tau_{[b_n + u\brsigma_n ]}]}}>\frac{n- \EXP[\tau_{[b_n + u\brsigma_n ]}]}{\sqrt{\var[\tau_{[b_n + u\brsigma_n ]}]}}\right)
$$
where 
$$u=t/\ba.$$
{We claim that 
\begin{equation}
\label{RHSRen}
\lim_{n\to\infty} \frac{n- \EXP[\tau_{[b_n + u\brsigma_n ]}]}{\sqrt{\var[\tau_{[b_n + u\brsigma_n ]}]}}=t. 
\end{equation}
Indeed using \Cref{LmQExp}(e) and the linear growth on $b_n$ (Lemma \ref{LmLin}) we obtain
$$
\EXP[\tau_{[b_n + u\brsigma_n]}]=n + [\ba u \brsigma_n ] + O(n^{0.4})=n + [t\brsigma_n ] + O(n^{0.4}).
$$
Hence the numerator of \eqref{RHSRen} 
is asymptotic to $t\brsigma_n.$

To analyze the denominator denote $\DS \tau_{m,n}=\sum_{k=m, n-1} r_k$ for $m<n.$ 
Then 
$$ \var(\tau_n)=\var(\tau_m)+\var(\tau_{m,n})+\text{Cov}(\tau_m, \tau_{m,n})=
\var(\tau_m)+\var(\tau_{m,n})+O(1). $$
By the linear growth of variance we obtain
$\DS \var(\tau_n)=\var(\tau_m)+O(|n-m|).$
It follows that the denominator of \eqref{RHSRen} is 
$\sqrt{\brsigma_n^2+O(\brsigma_n)}=\brsigma_n+O(1).$

Combining the estimates for the numerator and denominator we obtain \eqref{RHSRen}.

}

Combining \eqref{RHSRen} and \Cref{ThHit} we arrive at
\begin{equation}
\label{XCLT1}
\lim_{n \rightarrow \infty}P\Big(\frac{z_n^*- b_n}{
(1/\ba)\brsigma_n}>t\Big)=\int_{-\infty}^t \frac{1}{\sqrt{2\pi}} e^{-u^2/2} du.
\end{equation}
From the definition \eqref{DefNScale} it follows that
$\EXP(\tau_{\cZ(n)})/n \rightarrow 1$, as  $n \rightarrow \infty$. Then consider
$\EXP\Big(\frac{\tau_{\cZ(n)}}{\cZ(n)}\Big) \frac{\cZ(n)}{n}$. 
By Lemma \ref{LmQExp} 
for almost all environments $\DS \lim_{n \rightarrow \infty}\EXP\left(\frac{\tau_{\cZ(n)}}{\cZ(n)}\right)=\ba$. 
Hence for almost all environments we also have 
$$
\lim_{n \to \infty} \frac{\cZ(n)}{n}=\left( \lim_{n\to\infty} \frac{\cZ(n)}{\tau(\cZ(n))}\right)
\left(\lim_{n\to\infty} \frac{\tau(\cZ(n))}{n}\right)=
\frac{1}{\ba}.
$$
Thus
$$
\lim_{n \rightarrow \infty}\frac{\brsigma_n^2}{n}=\lim_{n \rightarrow \infty}\frac{\var[\tau_{\cZ(n)}]}{n}=\lim_{n \rightarrow \infty} \frac{\var[\tau_{\cZ(n)}]}{\cZ(n)}\frac{\cZ(n)}{n}=\frac{\bsigma^2}{\ba}.
$$
Therefore \eqref{XCLT1} can be rewritten as
$$
\lim_{n \rightarrow \infty}P\Big(\frac{z_n^*- b_n}{
(1/\ba^{3/2})\bsigma\sqrt{n}}>t\Big)=\int_{-\infty}^t \frac{1}{\sqrt{2\pi}} e^{-u^2/2} du.
$$
Splitting
$$
\frac{z_n-b_n}{\sqrt{n}}=\frac{z_n - z_n^*}{\sqrt{n}}+\frac{z_n^*-b_n}{\sqrt{n}}
$$
and using that 
\begin{equation}
\label{Z-Z*}
z_n^*-1\leq z_n\leq z_n^*
\end{equation}
we obtain part (a). 

(b) We write
$$
\frac{\tau_{n}-n \ba}{\sqrt{n}}=\frac{\tau_{n}-\EXP[\tau_{n}]}{ \sqrt{n}}+\frac{\EXP[\tau_{n}] - n\ba}{\sqrt{n}}
$$
By part (a), the first term is asymptotically normal.  By Lemma \ref{LmQExp}
the second term is also asymptotically normal. Moreover, those terms are asymptotically
independent since the second term depends only on the environment, while the distribution of the
first term is asymptotically independent of the environment due to part~(a). Since the sum of two
independent normal random variables is normal, $\DS \frac{\tau_{n}-n \ba}{\sqrt{n}}$
 is asymptotically normal with zero mean and variance
 $$\sigma^2=\bsigma^2+D^2$$ where $\bsigma$ is from Theorem \ref{main}(a) and $D$ is
from Lemma \ref{LmQExp}(d).

Let $v=1/\ba$. Then for $x(n,t):=\left\lceil n v+v^{3 / 2} \sigma \sqrt{n} t\right\rceil$ we have
\begin{equation}
\mathbb{P}\left(\frac{z_{n}^{*}-n v}{v^{3 / 2} \sigma \sqrt{n}}<t\right)=\mathbb{P}\left(\frac{\tau_{x(n, t)}-x(n, t) / v}{\sigma \sqrt{x(n, t)}}>\frac{n-x(n, t) / v}{\sigma \sqrt{x(n, t)}}\right).
\end{equation}
It follows from the above definition of  $x(n, t)$ that
$$
\lim _{n \rightarrow \infty} \frac{n-x(n, t)1/v}{\sigma \sqrt{x(n, t)}}=-t
$$
Hence the CLT for $z_n^*$ in the annealed case follows from the discussion above.
Using \eqref{Z-Z*} we obtain the annealed CLT for $z_n.$
\qed

\section{Auxiliary results for Model B}
We now turn to the proof of Theorem \ref{ThHit} (b). Due to the complexity of the dynamics the maps $G_n$ will be more complicated. 
In particular, the walker can make arbitrary large number of backward steps before moving from
site $n$ to site $n+1.$
Thus for Model B the maps $G_n$ have infinitely many branches and the first hitting times maps $\tau_n$ are unbounded. So we can no longer apply Theorem \ref{CRThm}. 
 In the rest of the paper we establish properties which help us to verify that the transformations $\{G_n\}$ and the maps $\{\tau_n\}_{n\geq 1}$ satisfy 
the conditions of Theorem \ref{pr-1} 
{ which extends Theorem~\ref{CRThm} to the case of unbounded observables.}

\subsection{Long itineraries}\label{inf-trj}
For each $n \geq 1$ and $t \in D_n^{(2)}$ we consider the set of all $x \in \mathbb{T}$ for which the walker, starting its journey from $(x,m)$, subsequently visits the sites $m+s_1(t), m+s_2(t), \dots,m + s_{n}(t)$, i.e. for all $0\leq k \leq n$ we have
$$
\pi_\mathbb{Z}(F^k(m,x))=m + \sum_{\ell=0}^k s_{\ell}(t),
$$
where $F^0(m,x)=(m,x)$. We denote the set of all such $x$ by $A_{t,n,m}$. 
Observe that, for large values of $a$ this set is not empty for arbitrary $m,t$. Indeed, 
for large values of $a$ any point $(z,x)$ will have a preimage inside any of the four intervals $(z-1, W_{z-1})$,  $(z-1, W_{z-1}^c)$, $(z+1, W_{z+1})$ and  $(z+1, W_{z+1}^c)$. This means that at each step, by choosing the backward image in an appropriate way, we can make the walker travel  in an arbitrary prescribed way. This proves that any trajectory is possible in Model~B.

Note that 
$A_{t,n,m}$ can be written in the following way. For $n=1$
$\DS 
A_{t,n,m}=W_{m,t_1},
$
and for $n>1$
$$
x\in W_{s_0(t) + m,t_1},$$
\begin{equation} T_{s_0(t)+m}(x)\in W_{s_1(t) + m,t_2},\label{mk}\end{equation}
$$\cdots$$ 
$$T_{s_{n-2}(t) + m}\circ T_{s_{n-3}(t) + m}\circ \cdots \circ T_{m	}(x)\in W_{s_{n-1}(t) + m,t_{n}} .$$

If $G_m$ in \eqref{induce} is well defined for almost all $x \in \mathbb{T}$, then one can see that
$$
G_m(x)=\sum_{n=1}^\infty\sum_{t \in R_{n}^{(2)}} T_{s_{n-1}(t)+m}\circ T_{s_{n-1}(t)+m}\circ \cdots \circ T_{s_0(t) + m}(x)\mathbbm{1}_{A_{t,n,m}}(x).
$$

Note that in \eqref{mk} the maps $T_{s_k(t)}$ and the gates $W_{s_k(t),t_{k+1}}$ are perturbations of the map $\brT$ and the gates $\brW_{\pm}$ (depending on the sign of $t_{k+1}$), so
due to the general nature of our argument, we will assume that the walker starts at 0, that is $m=0$, and replace the maps $T_{s_k(t)}$ and the gates $W_{s_k(t),t_{k+1}}$ with  maps $T_k(x)=(ax+h_k(x))\Mod 1$ and gates $W_{k,t_{k+1}}$ respectively. Thus the index $k$ in $W_{k,t_{k+1}}$ no longer represents the position of the walker, but rather is a numbering parameter. However, we leave the sequence $\{t_k\}_{k=1}^n$ the same. Hence, for all $n \geq 1$ and $t \in D^2_n$ we can write
\begin{equation}
\label{DefANT}
A_{t,n}=\bigcap_{\ell=0}^{n-1} (T_{1,{\ell}})^{-1}(W_{\ell, t_{\ell+1}}),
\end{equation}
where $T_1^0(x)=x$.

Since the sets $(T_{1,{\ell}})^{-1}(W_{\ell, t_{\ell+1}})$ consist of finitely many disjoint intervals, 
one can check that the set $A_{t,n}$ satisfies the conditions of Definition \ref{I-def}. 
Thus the collection $\I(A_{t,n})$ is well defined.

\begin{lemma}\label{cov-lab}
Let $\ell_1<\ell_2$ and let $\mathcal{P}_{\ell_1}$ and $\mathcal{P}_{\ell_2}$ be Markov partitions of $T_{1,\ell_1}$ and $T_{1,\ell_2}$ respectively (see \Cref{markov}). Then any interval from $\mathcal{P}_{\ell_2}$ can intersect at most two intervals from $\mathcal{P}_{\ell_1}$. 
\end{lemma}
\begin{proof}
Let $\mathcal{P}_{\ell_1}$ and $\mathcal{P}_{\ell_2}$ be Markov partitions of $T_{1,\ell_1}$ and $T_{1,\ell_2}$ respectively. We need to show that any $p \in \mathcal{P}_{\ell_2}$ can intersect at most two intervals from $\mathcal{P}_{\ell_1}$.
If some $p \in \mathcal{P}_{\ell_2}$ intersects more than two elements from $\mathcal{P}_{\ell_2}$ there should exist an element $p'\in \mathcal{P}_{\ell_2}$ so that $p' \subset p$.
Note that, by definition
$
T_{1,{\ell_2}}(p)= \mathbb{T}.$ However
\begin{equation}\label{int1}
T_{1,{\ell_2-1}}(p)\neq \mathbb{T}.
\end{equation}
But since $p' \subset p$ and $\ell_1<\ell_2$, then clearly
$\mathbb{T}=T_{1,{\ell_1}}(p')\subset T_{1,{\ell_2-1}}(p)$
contradicting \eqref{int1}.
\end{proof}

The next propositions helps us to understand the long trajectories. Although it may not be visible at first glance, the argument is close to and was inspired by the classical growth lemma that is fundamental in the study of billiard maps. 

\begin{proposition}\label{unbdd}
Let $t\in D^{(2)}_n$ and $\ell \geq 0$ be the number of $-1$s in $t$. Then, for all $a\in \mathbb{N}$ sufficiently large, there exists $k=k(a)\in \mathbb{N}$ and $\bar{\delta}(a)>0$ so that if $\delta\leq \bar{\delta}(a)$ then there is a 
Markov partition $\mathcal{P}_{n+1}$ of $T_{1,{n+1}}$ so that the number of 
Markov partition elements required to cover the set $A_{t,n}$
can be estimated as follows
\begin{equation}\label{part_in}
\{p\in \mathcal{P}_{n+1}:p \cap A_{t,n}\neq \emptyset\}\leq (k+1)^{l}(a-k+4)^{n-l}.
\end{equation}
Furthermore, each partition element contains at most one interval from $A_{t,n}$, i.e. for any $p \in \mathcal{P}_n$
\begin{equation}\label{one-int}
\#\{I\in \I(A_{t,n}): I\cap p \neq \emptyset \}\leq 1,
\end{equation}
and the following two bounds take place
\begin{equation}\label{part}
a|\brW_{-1}|\leq k \leq a|\brW_{-1}| + 2,
\end{equation}
and
\begin{equation}\label{meas}
|A_{t,n}|\leq \frac{(k+1)^{l}(a-k+4)^{n-l}}{(a-\bar\delta)^{n}}.
\end{equation}

\end{proposition}
\begin{proof}
The Markov partition of $T_\ell$ (see \eqref{markov}) will be denoted by
\begin{equation}\label{prt-mrk}
\mathbb{P}_\ell=\{P_{i}^\ell\}_{i=0}^{a-1}, \quad \ell \geq 1.
\end{equation}
To make it unique we assume that for all $\ell \geq 1$, 
$0\leq i \leq a-1$
 we have $T_\ell(\partial P_{i}^\ell)=\{0,1\}$. The Markov partition of $\brT$  will be denoted by $\{P_{i}\}_{i=0}^{a-1}$.

We now take $a$ so large and $\bar{\delta}(a)$ so small that for any $\ell \geq 1$, there is $0\leq i,j \leq a-1$ such that
$$
P_{i}^\ell \subset \interior W_{\ell,1}\quad
P_{j}^\ell \subset \interior W_{\ell,-1},
$$
where $\interior$ stands for the interior of the set.
Let $C_1$  and $C_{-1}$ be the minimal subsets of indexes $\{0, \dots, a-1\}$ such that for all $\ell,k \geq 0$
\begin{equation}\label{cov-l1}
\overline W_{k,1} \subset \interior\Big(\bigcup_{i \in C_1}P_i^\ell\Big) \quad \overline W_{k,-1} \subset \interior\Big(\bigcup_{i \in C_{-1}}P_i^\ell\Big).
\end{equation}

One can check that
\begin{equation}
\label{Inter4}
\# \{C_1 \cap C_{-1}\}\leq 4.
\end{equation}
Note that we have equality in \eqref{Inter4} iff $\overline W_{-1}= \overline{\Big(\cup_{i \in C_{-1}}P_i\Big)}$, where $P_i$ are from the Markov partition of $\brT$ defined in \eqref{prt-mrk}.
Therefore, if 
\begin{equation}\label{KOA}
k(a)=k= \# C_{-1} 
\end{equation}
then
\begin{equation}\label{part-est1}
\# C_{1} \leq a - (k - 4).
\end{equation}
One can also check that
\begin{equation}\label{lm-bd}
|\brW_{-1}|\leq \frac{k}{a}\leq |\brW_{-1}|+\frac{2}{a}.
\end{equation}
We now consider the set of all $x \in \mathbb{T}$ for which
$$
x\in { \bigcup_{i \in C_{t_1}} P_i^1}, 
\quad
T_{1}(x)\in { \bigcup_{i \in C_{t_2}} P_i^2}, 
\cdots,
T_{1,{n-1}}(x)=T_{n-1}(T_{n-1}( \cdots (T_1(x))\in { \bigcup_{i \in C_{t_n}} P_i^n}
$$
where $t=(t_1, \dots, t_n)$. Set
$$
B_n=\bigcap_{k=0}^{n-1} T_{1,k}^{-1}\left(\bigcup_{ i\in C_{t_{k+1}}} P_i^{k+1}\right),
$$
where $T_{1,0}(x)=x$. By \eqref{cov-l1} we have that $A_{t,n}\subset B_n$. Observe that for $1<k<n-1$ the collections $\mathcal{P}_{k}=\{T_{1,k}^{-1}(P_{i}^{k+1}):i=0,\dots,a-1\}$ (we have $k$ preimages and each preimage is counted separately) constitutes a Markov partition for $T_{1,k+1}$. Indeed, for any $p \in \mathcal{P}_k$
$$
T_{1,k+1}(p)=T_{k+1}(T_{1,k}(p))=T_{k+1}(P_{i}^{k+1})=\mathbb{T}.
$$

By assumption for each $0\leq k \leq n$ there exists $P_i^k, P_j^k$ so that $P_i^k \subset W_{k,-1}$ and also $P_j^k \subset W_{k,1}$. This means that for $t=\pm 1$, $T_{1,k}^{-1}(W_{k,t})$ contains an element from $\mathcal{P}_k$ in its interior, i.e. a Markov partition element from $T_{1,k+1}$.
We now bound the number of partition elements from $\mathcal{P}_{n-1}$ that cover the set $B_n$. For this observe that $B_n$ is the union of all partition elements $p$ from $\mathcal{P}_{n-1}$ for which we have that
for all $k=0,\dots, n$
\begin{equation}\label{cond-part}
T_{1,k}(p)\subset P_i^{k+1}, \text{for some }i \in C_{t_k}.
\end{equation}
Note that in term of the indexes $i$, this is the set of elements such that
$$
\{i_1, \dots, i_n: i_k \in C_{t_k}, 0\leq k \leq n-1\}.
$$
The cardinality of this set can be bounded from above as follows
$$
{(\# C_{-1}})^\ell (\# C_{1})^{n-\ell}
\leq k^\ell (a-(k-4))^{n-\ell}.
$$
Next, by the mean value theorem if $p=[a,b]$ then
$$
1=T_{1,{n}}(a)-T_{1,{n}}(b)=(T_{1,n})'(\zeta)(a-b).
$$
Thus
$$
1 = |(T_{1,n})'(\zeta)||p|.
$$
Hence
$$
|p|=\frac{1}{|(T_{1,n})'(\zeta)|}\leq \frac{1}{|(a+h_n'(T_{1,{n-1}}(\zeta)))\cdots (a+h_1'(T_1(\zeta)))|}\leq \frac{1}{(a-\bar\delta)^n}.
$$
This implies
$$
|A_{t,n}|\leq \frac{(k+1)^{l}(a-k+3)^{n-l}}{(a-\delta)^{n}}.
$$

Next note that the condition \eqref{part} follows from \eqref{lm-bd}.

We now show \eqref{one-int}. For this we observe that each interval $I \in \I(A_{t,n})$ contains a Markov partition element from $\mathcal{P}_{n-1}$ in its interior. Indeed,
assume the opposite. Then there exists two  intervals $I_1, I_2 \in \mathcal{I}(A_{n,t})$, so that
for some $p \in \mathcal{P}_{n-1}$ we have
$$
I_1 \cap p\neq \emptyset \text{ and } I_2 \cap p\neq \emptyset.
$$ 
Then there exists $\ell < n$ and two disjoint intervals $J_1, J_2 \in \mathcal{I}((T_{1,\ell})^{-1}(W_{\ell,t_{\ell+1}}))$, so that
$$
I_1 \subset J_1 \hbox{ and }I_2 \subset J_2.
$$
Then clearly $\dist(J_1, J_2)< |p|$, since one of their endpoints belongs to $p$.
Next, by construction there should be a Markov partition element $p' \in \mathcal{P}_{\ell}$ such that $p'$ is between $J_1$ and $J_2$. This means that $p' \subset p$. However this is not possible, since $p$ will also intersect the partition elements that are neighboring with $p'$. But this means that $p$ intersect at least $3$ partition element from $\mathcal{P}_\ell$ which is not possible due to Lemma~\ref{cov-lab}. This proves \eqref{one-int}.
\end{proof}

\begin{proposition}\label{drift}
Assume $|W_{-1}|<1/2$. Then there exists $\rho<1$ and $C>0$ such that for every $t=(t_1,\dots, t_n) \in D_n^{(2)}$, with $t_1+\dots + t_n\leq 1$, we have
\begin{equation}\label{drift-1}
|A_{t,n}|\leq C \frac{\rho^{n}}{2^n}.
\end{equation}
\end{proposition}
\begin{proof}
Let $l$ be the number of times the walker goes left during his journey. By \eqref{meas}
$$
|A_{t,n}|\leq\frac{(k+1)^{l}(a-k+4)^{n-l}}{(a-\delta)^{n}}.
$$
Next, due to $l\geq \frac{n-1}2$ we have that
$$
\frac{(k+1)^{l}(a-k+4)^{n-l}}{(a-\delta)^{n}}\leq C
\frac{(k+1)^{\frac{n}{2}}(a-k+4)^{{\frac{n}{2}}}}{(a-\delta)^{n}}
$$
where we have used that $k<a-k+4$ for large $a$, which holds since $|W_{-1}|<|W_1|.$
Rewrite
$$
2^n\frac{(k+1)^{\frac{n}{2}}(a-k+4)^{{\frac{n}{2}}}}{(a-\delta)^{n}}= 
\left( \frac{4(k+1)^{1/2}(a-k+4)^{1/2}}{(a-\delta)}
\right)^n.
$$
Next
$$
\lim_{a \rightarrow \infty, \delta \rightarrow 0} \frac{4(k+1)^{1/2}(a-k+4)^{1/2}}{(a-\delta)}=4|W_{-1}|(1-|W_{-1}|)<1
$$
where the last inequality is due to $|W_{-1}|<\frac{1}{2}$. 

Thus \eqref{drift-1} holds with
$\DS \rho = \frac{4(k+1)^{1/2}(a-k+4)^{1/2}}{(a-\delta)}.$
\end{proof}

\begin{lemma}\label{existence}
If $|\brW_{-1}|<1/2$, then for $a\in \mathbb{N}$ sufficiently large and $\delta=\delta(a)$ small, the maps $\{G_n\}_{n \in \mathbb{Z}}$ are 
defined almost everywhere. 
\end{lemma}
\begin{proof}
We need to show that
$$
\mes\left(x: z_n(x)
\leq 0, \forall n \geq 0\right)=0.
$$
Since the number of all trajectories of length $n$ is equal to $2^n$, then in view of Proposition \ref{drift} we can write
\begin{equation}
\label{LongExcursion}
\mes\left(x: z_n(x)
\leq 0\right)\leq 2^n \frac{\rho^n}{2^n}\leq \rho^n.
\end{equation}
This completes the proof.
\end{proof}

\subsection{Short itineraries}\label{shrt-it}

For $t \in D^2_n$ and gates $\{W_{k, t_{k+1}}\}_{k=0}^n$. Recall \eqref{DefANT}.
Let
\begin{equation}\label{hom0}
H_{t,n}=\bigcap_{k=0}^{n-1} (\brT^{k})^{-1}(\brW_{ t_{k+1}}),
\end{equation}
where $\brT=ax \Mod 1$.

\begin{proposition}\label{short-it}
For given $n \geq 1$ and $t \in D_n^{(2)}$, we have that

(a)
$\DS 
\#\I(A_{t,n})\leq 4a^{n-1}.
$ \smallskip

$(b)$
For every $\varepsilon>0$ there exists $\bar\delta(a)$ such that if $\delta\leq \bar\delta=\bar\delta(a)$ for every closed interval $I$ in the collection $\I(H_{t,n})$ there is 
a unique $J \in \I(A_{t,n})$, so that $|I \triangle J|<\varepsilon$. Moreover, $\bar\delta$ can be taken so small that if $\delta<\bar\delta$ then for every $x \in I \cap J$ we have
\begin{equation}\label{uni-close}
|T_{1,n}(x)-\brT^n(x)|\leq \varepsilon,
\quad \left|\frac{1}{(T_{1,n}(x))'} -\frac{1}{(\brT^n(x))'}\right|\leq \varepsilon,
\end{equation}
and
$$
|A_{t,n} \triangle H_{t,n}|\leq \varepsilon.
$$
\end{proposition}
\begin{proof}

It follows from the definition that the boundary of $A_{t,n}$ consists of 
$$
\partial A_{t,n}\subset\bigcup_{k=0}^{n-1} (T_{1,{k}})^{-1}(\partial W_{k, t_{k+1}})).
$$
In the same way
$$
\partial H_{t,n}\subset\bigcup_{k=0}^{n-1}(\brT^{k})^{-1}( \partial\brW_{t_{k+1}}).
$$
Note that $\#\partial(T_1^{k})^{-1}(W_{k, t_{k+1}})\leq 2 a^{k}$. Hence
$$
\#\partial A_{t,n}\leq  2 + 2a + \dots + 2 a^{n-1}=2 \frac{a^n-1}{a-1}\leq 4a^{n-1},
$$
for $a \geq 2$. This proofs part (a). \\

Obviously, for arbitrary $\varepsilon>0$ we can take $\bar\delta$ so small that $|T_{1,n}(x)-\brT^n(x)|<\varepsilon$ for all $x \in \mathbb{T}$ (the second statement in \eqref{uni-close} is also similar). Clearly the sets $\partial((T_{1,{k}})^{-1}(W_{k, t_{k+1}}))$ and $\partial(\brT^{k})^{-1}(\brW_{t_{k+1}})$ will be close to each other under small perturbations and for given $k$. Hence, for each $I \in \I(H_{t,n})$ its endpoints will change a little under small perturbations of $\brT$. Thus we obtain that for some $J \in \I(A_{t,n})$ we have	 $|I \triangle J|<\varepsilon$. 
Observe, 
that $\I(A_{t,n})$ may contain other intervals too, that come into existence under small perturbations of the maps $\brT^k$. However these intervals occupy a set of small measure.
Note that for any $k \geq 0$
$$
A_{t,n} \triangle H_{t,n}\subset((T_{1,{k}})^{-1}(W_{k, t_{k+1}}))\triangle ((\brT^{k})^{-1}(\brW_{t_{k+1}})).
$$
Thus for $\brdelta$ sufficiently small 
$|A_n \triangle A|<\varepsilon$.
\end{proof}

\begin{lemma}\label{second}
If $a$ is sufficiently large and $\delta(a)$ is small enough then
$$
\left|\left(\frac{1}{(T_{1,n}(x))'}\right)'\right|=\left|\frac{(T_{1,n}(x))''}{(T_{1,n}(x)')^2}\right|< D,
$$
where $\DS D:=\sup_{k\geq 1}\sup_{x \in \mathbb{T}} |h_k''(x)| <\infty$.
\end{lemma}
\begin{proof}
We have
$$
(T_{1,n}(x))'=T'_n(T_{1,{n-1}}(x))T'_{n-1}(T_{1,{n-2}}(x))\cdots T_1'(x).
$$
Hence
$$
(T_{1,n}(x))''=\sum_{k=1}^n T'_n(T_{1,{n-1}}(x))\cdots T''_{k}(T_{1,{k-1}}(x))(T_{1,{k-1}}(x)')^2.
$$
We rewrite this as follows
$$
(T_{1,n}(x))''=\sum_{k=1}^n \frac{(T_{1,n}(x))'}{T_k'(T_{1,{k-1}})}T_k''(T_{1,{k-1}})T_{1,{k-1}}(x)'.
$$
Thus
$$
|(T_{1,n}(x))''|\leq \sum_{k=1}^n \frac{|T_{1,n}(x)'|}{ |a-\delta|}D|T_{1,{k-1}}(x)'|.
$$
Hence
$$
\left|\frac{(T_{1,n}(x))''}{(T_{1,n}(x)')^2}\right|\leq \sum_{k=1}^n \frac{D}{ (a-\bar\delta)}\frac{|T_{1,{k-1}}(x)'|}{|T_{1,n}(x)'|}\leq \sum_{k=1}^n \frac{D}{(a-\bar\delta)^{n-k+1}}
\leq \sum_{m=1}^\infty \frac{D}{(a-\bar\delta)^m}=\frac{D}{a+1-\brdelta}
$$
The last sum is smaller than $D$ if $\brdelta<1$. 
\end{proof}

\section{Properties of transfer operators}\label{LY-sct}

\begin{proposition}\label{prop-ly}
Let $a$ be as in Lemma \ref{existence}.
Then there is $\bar\delta(a)$, such that if $\delta<\bar\delta(a)$, then the collection $\{P_n\}_{n \geq 1}$  satisfies property \eqref{LY}:
there exists a constant $C(a, \bar\delta)>0$ such that for every $n \geq 1$
\begin{equation}
\label{Var3A}
\mathrm{V}\left(P_n f\right) \leq {\frac{3}{4}} \mathrm{V} (f)+C\|f\|_{1},
\end{equation}
\end{proposition}

\begin{proof}
Throughout the proof, instead of the notations $G_n$ and $P_n$, we will use the generic notations $G$ and $P$. As earlier, for $A_{t,n}$ let $\I(A_{t,n})$ be the set defined in \ref{I-def}.

As in the proof of Lemma \ref{lz-prf}, {
$\DS 
V(P f)\leq 
I+\RmII$ where $I$ and $\RmII$ are given by \eqref{VProd}.}

Similarly to the proof of \eqref{lz-prf} 
\begin{equation}\label{est-I}
\text{I} \leq \frac{V(f)}{a-\brdelta}+
L \|f\|_1\quad \mathrm{where}\quad L=\sup_{x\in \mathbb{T}} \frac{|G''(x)|}{|(G'(x))^2|}.
\end{equation}
By Proposition \ref{second}, 
$L\leq D.$

Note that for $x \in A_{t,n}$ we have $(a-\bar\delta)^n\leq|G'(x)|\leq (a+\bar\delta)^n$. 
For large values of $n$ we estimate $\RmII$ using \eqref{1G-3} taking $I=\mathbb{T}$. Then for each interval $I_j \in \I(A_{t,n})$ we have
$$
\Big|\left(\frac{f}{G^{\prime}}\right)\left(\sigma_{i} \alpha_{i}\right)\Big|+\Big|\left(\frac{f}{G^{\prime}}\right)\left(\sigma_{i} \beta_{i}\right)\Big| \leq \frac{1}{(a-\bar\delta)^{n}} V_{\mathbb{T}}(f) + \frac{2}{(a-\bar\delta)^{n}}\int_{\mathbb{T}} |f|dx
$$
\begin{equation}\label{ly-2}
\leq \frac{2}{(a-\bar\delta)^{n}}(V(f)+\|f\|_1)=\frac{2}{(a-\bar\delta)^{n}} |f|_{BV}.
\end{equation}
Hence, by Proposition \ref{drift}
$$
\sum_{n=N}^\infty\sum_{t\in R_n^{(2)}}\sum_{I \in \I(A_{t,n})}\Big|\left(\frac{f}{G^{\prime}}\right)\left(\sigma_{i} \alpha_{i}\right)\Big|+\Big|\left(\frac{f}{G^{\prime}}\right)\left(\sigma_{i} \beta_{i}\right)\Big|
$$
\begin{equation}\label{lrg-1}
\leq \sum_{n=N}^\infty\frac{\#\I(A_{t, n})}{(a-\delta)^{n+1}}|f|_{BV}\leq \sum_{n=N}^\infty\rho^n |f|_{BV}.
\end{equation}
We now consider the terms $n<N$. First, for $I\in \I(A_{t,n})$ with $|I|>\frac{1}{(a-\bar\delta)^{n}}$ we repeat the 
argument from the proof of Lemma \ref{lz-prf} and divide $I$ into intervals $\{J\}$ of length $\frac{1}{2(a-\bar\delta)^{n+1}}$ and an interval $J'$ with $\frac{1}{2(a-\bar\delta)^{n+1}}<|J'|<\frac{1}{(a-\bar\delta)^{n+1}}$. Then, we use the estimate \eqref{1G-2} with $\gamma \geq (a-\brdelta)^n$ to obtain
$$
\Big|\frac{f}{G^{\prime}}\Big|(\sigma_{j} \alpha_{j})+
\Big|\frac{f}{G^{\prime}}\Big|(\sigma_{j} \beta_{j})\leq \frac{V_{J}(f)}{(a-\bar\delta)^{n}}+\frac{4 (a-\brdelta)^{n+1}}{(a-\bar\delta)^{n}}\int_{I_j}  |f(x)| dx.
$$
Summing over all such intervals $I \in \I(A_{t,n})$, for all $n \leq N$ and $t \in R_{n}^2$ we get
\begin{equation}\label{md-1}
\sum_{n \leq N}\sum_{t \in R_n^{(2)}}\sum_{J: |I|>\frac{1}{(a-\brdelta)^{n}}}\left(\Big|\frac{f}{G^{\prime}}\Big|(\sigma_{j} \alpha_{j})+
\Big|\frac{f}{G^{\prime}}\Big|(\sigma_{j} \beta_{j})\right)\leq \frac{V_{\mathbb{T}}(f)}{a-\bar\delta}+ C_N(a, \bar\delta)\|f\|_1.
\end{equation}
Now it remains to deal with intervals with lengths $|I|<\frac{1}{(a-\bar\delta)^{n}}$. To this end we again use the bound \eqref{ly-2}. Then, in view of Proposition \ref{short-it}(a)
$$
\sum_{I: |I|\leq \frac{1}{(a-\brdelta)^n}}V\Big(f\left(\sigma_{j} x\right) \frac{1}{\left|G^{\prime}\left(\sigma_{j} x\right)\right|} 1_{G\left(I_{j}\right)}\Big)\leq \frac{4a^{n-1}}{(a-\bar\delta)^{n}}\|f\|_{BV}.
$$
Summing over $n<N$
$$
\sum_{n \leq N}\sum_{I:|I|\leq \frac{1}{(a-\brdelta)^n}}V\Big(f\left(\sigma_{j} x\right) \frac{1}{\left|G^{\prime}\left(\sigma_{j} x\right)\right|} 1_{G\left(I_{j}\right)}\Big)
$$
\begin{equation}\label{sm-1}
\leq \frac{4}{(a-\bar\delta)}\sum_{n \leq N}2^n \Big(\frac{a}{a-\bar\delta}\Big)^{n-1}\|f\|_{BV}\leq 
\frac{2^{ N+3}}{(a-\bar\delta)}\|f\|_{BV}
\end{equation}
if $\bar\delta$ is small enough.

Now, taking $N$ large in \eqref{lrg-1}, the slope $a$ large in \eqref{sm-1} and taking \eqref{md-1}, \eqref{est-I} into account { we obtain
 \eqref{Var3A}}.
\end{proof}

Recall the definition of $r_n$ in \eqref{induce}. Observe now that for any $t \in R^{(2)}_n$ and $k\geq 0$ we have
\begin{equation}\label{prpt}
r_m(x)=2k+1, \hbox{ for any }x \in A_{t, 2k+1,m}.
\end{equation}
Below $P$ and $r$ will stand for a generic transfer operator $P_m$ and the first hitting times map $r_m$ for $m \in \mathbb{Z}$. As earlier we will drop the index $m$.

\begin{proposition}\label{bdd-1}
There exists $\bar\delta_0$ so that if $\delta<\bar\delta_0$ then there exists $D>0$ such that
$$
|P(f)|_{BV} \leq D|f|_{BV},
$$
and
$$
{|P(r f)|_{BV} \leq D|f|_{BV}, \quad |P(r^2 f)|_{BV} \leq D|f|_{BV}, \quad \forall f \in BV}.
$$
\end{proposition}
\begin{proof}
We follow  the argument of Proposition \ref{prop-ly}. If in \eqref{lrg-1} instead of $f$ we consider the function 
 $r f$, then using an identity $V_I(af)=aV_I(f)$, 
valid for  $a\in \R$ and an interval $I$, and in view of \eqref{prpt} we obtain
\begin{align}\label{bd-8}
V(P (r f))\leq \sum_{n=1}^\infty C_1 n\rho^{n}|f|_{BV}.
\end{align}
Thus
$$
V(P (r f))
 \leq C_2 |f|_{BV}. 
$$
Observe also that
$$
\int_\mathbb{T} |P(r f)|dx\leq \int_\mathbb{T} P(r|f|)dx =\int_\mathbb{T} r|f|dx\leq |f|_{BV} \int_\mathbb{T}r dx.
$$
Thus by \eqref{meas}
$$
\int_\mathbb{T}r dx \leq \sum_{\ell=0}^\infty \sum_{t \in R^{(2)}_{2\ell + 1}}(2\ell + 1)|A_{t, 2\ell + 1}|<C_1 \sum_{\ell=0}^\infty (2\ell + 1)\rho^{\ell}<\infty.
$$
Combining the above estimates we get
$$
|P(r f)|_{BV}=V(P(r f))+\|P(r f)\|_1\leq D |f|_{BV}.
$$
The estimates for $|P(r^2 f)|_{BV}$ and $|P(f)|_{BV}$ are similar.
\end{proof}

Let $P, r$ and $\brP, \brr$ be the transfer operator and the first hitting times map of the perturbed and unperturbed cases respectively.

\begin{proposition}\label{bdd-norm-2}
Given $\varepsilon>0$, $\delta_0(a)$ can be taken so small that if $\delta\leq \delta_0(a)$, then
\begin{equation}\label{d-dist}
\|P(f)-\brP(f)\|_1\leq \varepsilon|f|_{BV}, \quad \|P(f)-\brP(f)\|_2\leq \varepsilon|f|_{BV}\quad  f \in BV
\end{equation}
and
\begin{equation}\label{df-dist}
\|P(r f)-\brP(\brr f)\|_1\leq \varepsilon|f|_{BV}, \quad \|P(r f)-\brP(\brr f)\|_2\leq \varepsilon|f|_{BV}\quad  f \in BV.
\end{equation}
For $\delta$ small, we will also have
\begin{equation}\label{dist-ob}
\|r - \brr\|_2\leq \varepsilon.
\end{equation}
\end{proposition}
\begin{proof}
Note that if we have the first statement in \eqref{d-dist}, then
 by Proposition \ref{bdd-1}
$$
\|\brP(\brr f)-P(r f)\|_2\leq\sqrt{\|\brP(\brr f)-P(r f)\|_\infty \|\brP(\brr f)-P(r f)\|_1 }
$$
$$
\leq \sqrt{2D|f|_{BV}\int_\mathbb{T} |\brP(r f)-P(\brr f)|dx}
\leq \sqrt{2D \varepsilon }|f|_{BV}.
$$
Hence, the second statement in \eqref{d-dist} follows from the first. In a similar way we can show that the second statement in \eqref{df-dist} follows from the first. 

We will show the first estimate in \eqref{df-dist}, as the proof of \eqref{d-dist} is similar. 
We have 
$$
P(rf)(x)=\sum_{y: G(y)=x}\frac{r(y)f(y)}{\left|G^{\prime}(y)\right|}.
$$
For given $N \in \mathbb{N}$ we write
\begin{align*}
\|P(r f)-\brP(\brr f)\|_1 &\leq \|P(\mathbf{1}_{[1,N]}(r) f)-\brP(\mathbf{1}_{[1,N]}(\brr)f)\|_1\\
& + \|P(\mathbf{1}_{(N,\infty]}(r) f)\|_1 + \|\brP(\mathbf{1}_{(N,\infty]}(\brr) f)\|_1.
\end{align*}
By Lemma \ref{l2-norm}
$$
\|P(\mathbf{1}_{(N,\infty)}(r) f)\|_1= \|\mathbf{1}_{(N,\infty)}(r) f\|_1 \leq |f|_{BV} \|\mathbf{1}_{[N,\infty)}(r) \|_1.
$$
For the last expression we have {by \eqref{LongExcursion}}
$$
\|\mathbf{1}_{(N,\infty)}(r) \|_1\leq 	C\sum_{n> N}n \rho^{n}.
$$
Hence
\begin{equation}\label{larg-N}
\|\mathbf{1}_{(N,\infty)}(r) P(r f)-\mathbf{1}_{(N,\infty)}(\brr)\brP(\brr f)\|_1\leq 2C \sum_{n> N}n \rho^{n}|f|_{BV}.
\end{equation}

{Take $N$ so large that $\DS 2C \sum_{n> N}n \rho^{n}\leq \eps.$
Then} it remains to study the term
$$
\|P(\mathbf{1}_{[1,N]}(r) f)-\brP(\mathbf{1}_{[1,N]}(\brr)f)\|_1.
$$
{Recall \eqref{var-bd}}.
For each $n\in \mathbb{N}$ and $t \in R_n^{(2)}$ consider the sets
$A_{t, n}\hbox{ and }H_{t, n}$ defined in \eqref{DefANT} and \eqref{hom0}.
By Proposition \ref{short-it} we have that 
for any $I \in \I(H_{t,n})$ there exists $J \in \I(A_{t,n})$ such that for any $\varepsilon>0$, $\delta_0$ can be taken so small that if $\delta<\delta_0$, then for all $x \in I \cap J$
$$
|G(x)-\brG(x)|\leq \varepsilon.
$$
We now consider the restrictions of transfer operators $P$ and $\brP$ onto the set $I \cap J$. Analogous to the proof of {Lemma \ref{d-dst}}
we can consider a pairing of the preimages of $x$. On the set $A$ of $x$ for which there is pairing, we can write
\begin{equation}
(P f(x)-\brP f(x))\Big|_{x\in A}
=\left(\sum_{y_1\in I \cap J:G(y_1)=x} \frac{\brr f(y_1)}{\brG'(y_1)}-\sum_{y_2 \in I \cap J:\brG(y_2)=x} \frac{r f(y_2)}{G'(y_2)}\right)
\end{equation}

We have that $\gamma=(a-\delta_0)\leq G'(x)|_{A_{t,n}}.$ Then by Proposition \ref{second}, 
$$|G''(x)|\leq D |G'(x)|\leq D (a+\delta)^n=K_1(n)$$ and noting that 
$|y_1-y_2|<\frac{L \delta_0}{\gamma}$,
we can repeat the same computations as in Lemma \ref{d-dst} and obtain
\begin{equation}\label{intervals}
\int_{A}\left|\frac{\brr f(y_2)}{\brG'(y_1)}-\frac{r f(y_2)}{G'(y_2)}\right|dx\leq n\|f\|_\infty \delta_0 L\left(\frac{K_1(n)}{\gamma^3}+\frac{1}{\gamma^2}\right).
\end{equation}

If there is no pairing between the preimages of $x$, then the preimage of $x$ lies in the set $A_{t,n} \triangle H_{t,n}$. 


By Proposition {\ref{short-it}(b)} we have that $|A_{t,k}\triangle H_{t,k}|<\varepsilon$, for $k \leq N$, if $\delta\leq \delta_0$. Hence, the measure of the points in $x \in \mathbb{T}$ which have a preimage in $A_{t,k}\triangle H_{t,k}$, with $k \leq N$, can be estimated as follows
$$
|G(A_{t,k}\setminus H_{t,k})|\leq |T_{1,k}(A_{t,k}\setminus H_{t,k})| \leq (a+\delta_0)^k |A_{t,k}\setminus H_{t,k}|\leq \varepsilon (a+\delta_0)^N
$$
and respectively
$$
|\brG(H_{t,k}\setminus A_{t,k})|\leq  a^k |H_{t,k}\setminus A_{t,k}|\leq \varepsilon a^N.
$$
Since for fixed $k$ and $t$ every $x$ can have at most $a^k$ many preimages under $T_{1,k}$, $(k \leq a)$, then we can estimate the measure of the points $x$ for which there is no pairing between its preimages as follows
\begin{equation}\label{cloud}
\left\|\sum_{\{x:\exists y, \text{ such that } y\in  A_{t,k}\setminus H_{t,k}, G(y)=x\}}\frac{ r(y)f(y)}{\left|G^{\prime}(y)\right|}\right\|_1 \leq \frac{N \|f\|_\infty}{(a-\delta_0)^n} a^k (a+\delta_0)^N\varepsilon,
\end{equation}
in a similar way
\begin{equation}\label{cloud1}
\left\|\sum_{\{x:\exists y, \text{ such that } y\in  H_{t,k}\setminus A_{t,k}, \brG(y)=x\}}\frac{ \brr(y)f(y)}{\left|\brG^{\prime}(y)\right|}\right\|_1 \leq \frac{N \|f\|_\infty}{(a-\delta_0)^n} a^k a^N\varepsilon.
\end{equation}
Summing the above for all $k\leq N$, $t \in R^{(2)}_k$ and considering \eqref{larg-N} and \eqref{intervals} we arrive at the estimate
$$
\|\brP{(\brr f)}-P{(r f)}\|_1\leq C\varepsilon|f|_{BV}.
$$
Since $\varepsilon$ was arbitrary, \eqref{df-dist} follows.\\

To see \eqref{dist-ob}, note that for $x \in I\cap J$ we have that
$$
\bar\tau(x)=\tau(x)=n.
$$
Hence
\begin{align*}
\|\bar\tau-\tau\|_2 \leq & \|\mathbf{1}_{(N,\infty]}(\tau)\|_2 + \|\mathbf{1}_{(N,\infty]}(\bar\tau)\|_2
 + \|\mathbf{1}_{[1,N]}(\bar\tau)-\mathbf{1}_{[1,N]}(\tau)\|_2\\
\leq & 2C_1 \sum_{\ell=N}^\infty \ell^2\rho^{\ell }+\sum_{n=1}^N 2^n N\sqrt{|A_{t,n}\triangle H_{t,n}|}.
\end{align*}

Taking $N$ large and $\delta_0$ sufficiently small, we get \eqref{dist-ob}.
\end{proof}

\begin{proposition}\label{asmpt}
Let
$$
\tilde{r}_k=r_k(G_{1,k-1}(x))-\int_\mathbb{T} r_k(G_{1,k-1}(x))dx,
\quad
S_n(x) =\sum_{k=1}^{n-1}\tilde{r}_k.
$$
Assume that 
$\|S_n\|^2=\sigma_n^2\geq Dn$, for all $n \geq 1$. Then for arbitrary $\varepsilon>0$
\begin{equation}\label{series}
\lim_{n \rightarrow \infty}\frac{\sum_{k=1}^n \int_\mathbb{T} \tilde{r}_{k}^{2}(x) 1_{\left[\varepsilon \sigma_{n}, \infty\right)}\left(\tilde{r}_{k}^{2}(x)\right)dx
}{\sigma^2_n}=0.
\end{equation}

\end{proposition}
\begin{proof}
We have 
$$
\sup_{k\geq 1}\int_\mathbb{T} r_k(G_{k-1} \circ\cdots \circ G_{1}(x))dx\leq \sup_{k \geq 1}\Big(\|P_{1,k-1}\mathbf{1}\|_\infty  \int_\mathbb{T} r_k(x)dx\Big)=R<\infty.
$$
Note that
$$
\tilde{r}_k^2(x) \leq 2 (r_k^2(x) + R^2).
$$
Also, if $2R\leq r_k(x)$ then
$
\tilde{r}_k^2(x)\geq (r_k(x)-R)^2 \geq \DS 
\frac{r_k^2(x)}{4} .
$
Since $\sigma_n$ tends to infinity as $n \rightarrow \infty$, then
$$
\int_\mathbb{T} \tilde{r}_{k}^{2}(x)1_{\left[\varepsilon \sigma_{n}, \infty\right)}(\tilde{r}_{k}^{2}(x))dx\leq 2\int_\mathbb{T} \Big(r_{k}^{2}(x)+R^2\Big)1_{\left[4\sqrt{\varepsilon \sigma_{n}}, \infty\right)}({r}_{k}^{2}(x))dx
$$
$$
\leq  2\sum_{k=[2\sqrt{\varepsilon \sigma_{n}}]}^\infty\sum_{t \in R_k^{(2)}}(k^2+R^2)|A_{t,k}|.
$$
For large $k$ we can write $k^2 + R^2 \leq 2k^2$. Then
$$
2\sum_{k=[2\sqrt{\varepsilon \sigma_{n}}]}\sum_{t \in R_k^{(2)}}(k^2+R^2)|A_{t,k}|\leq C \sum_{k=[\sqrt{2\varepsilon \sigma_{n}}]}k^2\rho^{k}.
$$
Since $\sigma_n^2 \geq Dn$,  
the sum in the 
numerator of \eqref{series} tends to $0$ uniformly in $k$ as  $n \rightarrow \infty$. This finishes the proof of  proposition. 
\end{proof}

\begin{lemma}\label{pre-img}
(a) The maps $\{G_n\}_{n\in \mathbb{Z}}$ for Model B satisfy property (C) 
{from  \S \ref{dens}. 

(b) for each $n \in \mathbb{Z}$ there is an interval $J_n \subseteq \mathbb{T}$ such that $G_n(J_n)=\mathbb{T}$ and 
$\DS \sup_{n \geq 1}\sup_{x \in J_n}|G_n'(x)|\leq K_0$, for some finite $K_0$. 

(c) There exists $\sigma>0$ such that}
\begin{equation}\label{mncov}
P_n \dots P_1 \mathbf{1}(x)\geq \sigma,
\end{equation}
for any $x \in \mathbb{T}$, and all $n \geq 1$.
\end{lemma}
\begin{proof}
First note that by the choice of slope $a>1$ for every $n\geq 1$ we have that the the gate $W_{n,1}$ contains a 
Markov partition element of $T_n$. Since
$G_n|_{W_{n,1}}=T_n|_{W_{n,1}}$, { we obtain part (b) 
choosing $J_n=W_{n,1}$.}

{ Since part (c) follows from parts (a) and (b) due to Proposition \ref{sptgp}(a), }
it only remains to prove property (C).

We first show that there is $k=k(|I|)$, so that
\begin{equation}\label{cov-1}
\pi_\mathbb{T}(F^k(I, 0))=\mathbb{T}.
\end{equation}

The idea of the proof of this fact is analogous to Lemma \ref{min-1} and is based on complexity estimates. Without loss of generality we can assume that $n=1$ and that $I \subset W_{1,1}$ or $I \subset W_{1,-1}$. We now construct a sequence of intervals $\{I_n\}_{n\geq 1}$ and positions $\{z_n\}_{n \geq 1}$, $z_n \in \mathbb{Z}$ $n\geq 1$, such that 
$I_n \subset W_{z_n,1}$ or $I_n \subset W_{z_n,-1}$ and $\pi_{\mathbb{Z}}F(z_n,I_n)=z_{n+1}$. $\pi_{\mathbb{T}}F(z_n,I_n)$ is divided by the singularity points $\partial W_{z_{n+1}}$ into continuity components and $I_{n+1}$ is chosen to be the largest component. Similar to Lemma \ref{min-1} we  have that
\begin{equation}\label{iter-tt}
|I_{n+1}|\geq \frac{(a-\delta_0)|I_n|}{3}.
\end{equation}
Hence, if $(a-\delta_0)>3$ then the lengths of $|I_n|$ will grow exponentially fast. Hence, for some $n$, the interval $\pi_\mathbb{T} F(z_n, I_{n})$ will cover two singularity points at once, i.e. we will have
$$
W_{z_{n+1},1}\subset\pi_\mathbb{T} F(z_n, I_{n})
\text{ or }
W_{z_{n+1},-1}\subset\pi_\mathbb{T} F(z_n, I_{n}).
$$
This means that $\pi_\mathbb{T} F(z_n, I_{n})$ will contain a gate partition in its interior.
Note also that $n<C\ln (1/|I|)$.
If now as $I_{n+1}$ we chose the corresponding gate interval and take into account that each gate contains a 
Markov partition element of $T_{z_n}$ in its interior, it will follow that $\pi_\mathbb{T} F(z_{n+1}, I_{n+1})=\mathbb{T}$, implying \eqref{cov-1}. 

Observe now that from this time onward, we can choose the interval $I_{n+k}$ to be the forward gate of $z_{n+k}$, i.e. $I_{n+k}=W_{z_{n+k,1}}$. But then $z_{n+k}=z_{n+1} + k$, for $k \geq 2$ and $\pi_\mathbb{T} F(z_{n+k}, I_{n+k})=\mathbb{T}$. Hence, there will be a time $\ell$  and a position $m> 1$ which will be visited by the walker for the first time and to reach there the walker will have to make no more then $C' \ln (1/|I|)$ steps. We will then have
$\DS 
\pi_\mathbb{T}F^\ell(1,I)=\mathbb{T}
$
and
\begin{equation}\label{(c)}
G_{1,m}(I)=\mathbb{T}.
\end{equation}
This proves the claim. Clearly $m<C \ln (1/|I|)$. Observe that by taking $m$ large we can also make the time $m$ in \eqref{(c)} uniform for all intervals $I$ with the given length. It also follows from our discussion that for every $x \in \mathbb{T}$ there exists $y \in I$ such that $G_{1,m}(y)=x$ and
$$
|G'(y)|\leq (a+\brdelta)^s,
$$
where $s \leq C' \ln (1/|I|)$. This finishes the proof of property (C). 
\end{proof}

\begin{proposition}\label{exact}
The transfer operator $\brP$ of $\brG$ satisfies property \eqref{dec}.
\end{proposition}
\begin{proof}
This follows  from Proposition \eqref{sptgp}(b), Lemma \ref{pre-img} and Proposition \ref{prop-ly}.
\end{proof}

{
\begin{proposition}\label{PrVarLin}
The variance of $\tau_n$ grows linearly.
\end{proposition}
}

\begin{proof}
 According to the discussion at the beginning of Lemma \ref{pre-img},  there exists $x_0$, such that $\brG(x_0)=x_0$ and $\brr(x_0)=1$. Then $\int_\mathbb{T}\brr hdx>\int_\mathbb{T}hdx=1=\brr(x_0)$. By  \Cref{bdd-1} 
 $\brP \brr\in BV$. Thus, by \Cref{cob}, $\brr$ is not cohomologous to zero and 
 {\Cref{lm-1}} gives the result.
\end{proof}

\section{Proof of the main results for Model B.}
\subsection{Proof of Theorem \ref{ThHit}(b).}
\label{SSHit-B}

We will check the conditions of Theorem \ref{pr-1} for the sequence $\{G_n\}_{n=1}^\infty$. 
Recall that 
$$
\tau_n(x)=\sum_{k=0}^{n-1} r_k(G_{k-1}\circ\dots \circ G_0 x). 
$$
By Proposition \ref{prop-ly}, both $\brP$ and $P_n$, $n\geq 1$ satisfy \eqref{LY} for all $\delta$ sufficiently small and $a$ large. By Proposition \ref{bdd-norm-2},  
$P_n$ and $\brP$ are close in $d_1$ norm, when $\delta$ is small. 
By Proposition \ref{exact}, $\brP$ satisfies \eqref{dec}.
Hence, by Proposition \ref{nbrhd-sc},  \eqref{dec} holds in a $d_1$ neighborhood of $\brP$. Thus, we will have \eqref{dec} for the collection $\{P_n\}_{n \in \mathbb{Z}}$ for sufficiently small $\delta$.
\eqref{Min} for $\mathcal{P}^n \mathbf{1}$ follows from Lemma \ref{pre-img}. 
Next,  \eqref{norm-bd} follows from Proposition \ref{bdd-1}. 
By \Cref{PrVarLin}
the variance of $\tau_n$ grows linearly. 
Finally,  \eqref{asm} follows from Proposition \ref{asmpt}. Now 
Theorem~\ref{ThHit}(b) follows from \Cref{pr-1}.
\qed

\subsection{Backtracking}
\label{SSBack}

\begin{lemma}\label{max}
(a) Denote $\DS z_n^{*}=\max_{0\leq k\leq n}z_k$. Then for Model  B we have 
$\DS
\lim _{n \rightarrow \infty} \frac{z_{n}^{*}-z_{n}}{\sqrt{n}}=0,
$
almost surely.

(b) There are constants $C>0, \theta<1$ such that 
$$ Pr(z_{(\cdot)}\text{ visits }n-k\text{ after reaching }
n)\leq C \theta^k$$
where $\Pr$ denotes the Lebesgue measure.
\end{lemma}
\begin{proof}
(a) Without the loss of generality we can assume $z_0=0$. 
By Borel-Cantelli Lemma it suffices to show that for each $t>0$
$$
\sum_{n=1} \Pr\left(x: \frac{z_{n}(x)^{*}-z_{n}(x)}{\sqrt{n}}>t\right)<\infty.
$$

Let $\ell_n(x)=\min \{k:0\leq k\leq n, z_k(x)=z_n^*(x)\}$. One can see that if $z_{n}^{*}-z_{n}>t\sqrt{n}$ then $\ell_n(x)\leq n-t\sqrt{n}$. Hence
\begin{equation}\label{estm1}
\Pr(z_{n}^{*}-z_{n}>t\sqrt{n})\leq  \Pr(\ell_n(x)\leq n-t\sqrt{n}+1)
=\sum_{k=1}^{[n-t\sqrt{n}]+1}\Pr(\ell_n(x)=k).
\end{equation}
Next consider the sets
$$
B_{m,k}=\{x:z_k(x)\leq m| z_0=m\}.
$$
This is the set of points, for which the walker starting its walk at $z_0=m$ will be located to the left of $m$ after $k$ steps.
{By \eqref{LongExcursion}}
we have that for every $m \in \mathbb{Z}, k\geq 1$
$$
|B_{m,k}|\leq C\rho^{k},
$$
for some $\rho<1$.
Next, note that
\begin{align*}
\Pr(\ell_n(x)=k)&\leq \sum_{s=1}^k\int_\mathbb{T}\chi_{B_{s,n-k}}(G_{s-1}\circ \cdots\circ G_0(x))dx \\
& =\sum_{s=1}^k \int_\mathbb{T}\chi_{B_{s,n-k}}(x)P_{s-1}\dots P_0 \mathbf{1} dx\leq M k\sup_{s \leq k}|B_{s,n-k}|\leq CM k\rho^{n-k}
\end{align*}
{ where $M$ is from \eqref{den-bdd}.}
Hence, by \eqref{estm1} and due to $k \leq n-t\sqrt{n}+1$, we can write
$$
\Pr(z_{n}^{*}-z_{n}>t\sqrt{n})\leq 
n CM n\rho^{t\sqrt{n}-1}.
$$
To finish the proof { of part (a)} it is enough to notice that
$$
\DS
\sum_{n \geq 1} \Pr\left(z_{n}^{*}-z_{n} \geq t\sqrt{n}\right)<MC\sum_{n \geq 1}
n^2 \rho^{t \sqrt{n}-1}
<\infty.
$$

{ Part (b) also follows from \eqref{LongExcursion} and the fact that upon first time reaching level n the internal
state of the walker is distributed with the bounded density $\cP^n \bf{1}.$}
\end{proof}

{
\subsection{Proof of Theorem \ref{main} for Model B} 
\label{ScNatural}
Given the results of \S\S \ref{SSHit-B}--\ref{SSBack} the proof of Theorem \ref{main} for Model B is similar to
the proof for Model A and requires only minor modifications which we presently describe.

(1) \eqref{Z-Z*} no longer holds, however \Cref{max}(a)  is sufficient for replacing $z_n^*$ by $z_n$ in 
our limit theorems.
\\

(2) The proof of \Cref{LmQExp}(c) needs to be modified since 
$\fa_{n_1,n_1-n_2}$ and $\fa_{n_2}$ are no longer independent.
However, one can replace
$\fa_{n,k}$ by 
$$ \tilde\fa_{m, k}=\int_\mathbb{T} r_m(\tG_{m-1} \circ \dots \circ \tG_{m-k/2} (x))dx$$
where $\tG_\ell$ are obtained by motion in the environment where $W_{m-k, 1}=\T$.
In other words, upon reaching level $m-k$ the particle makes the next step to the right with probability 1.
Then $\tilde\fa_{n_1,n_1-n_2}$ and $\fa_{n_2}$ are independent. On the other hand
$$ \fa_m-\tilde\fa_{m,k}=[\fa_m-\fa_{m,k/2}]+[\fa_{m, k/2}-\tilde\fa_{m,k}]. $$
The first term is exponentially small due to \eqref{dec}, while the second term equals
to 
$$ \int_\mathbb{T}[ r_m(\tG_{m-1} \circ \dots \circ \tG_{m-k/2} (x))- r_m(G_{m-1} \circ \dots \circ G_{m-k/2} (x))]
dx$$
and it is exponentially small since the integrand is non-zero only if the walker starting from level 
$m-k/2$ backtracks to level $m-k$ which happens with exponentially small probability due to
\eqref{LongExcursion}. \\

(3) It is no longer true that $\tau_n\leq 3n$ so the proof of \Cref{LmLin} needs to be modified. 

As before the estimate for $b_n$ follows from the estimate for $\cS(n).$ The lower bound on $\cS(n)$ 
still holds because $\tau_n\geq n$. The upper bound follows from the uniform integrability of $r_m$
which is ensured by \eqref{LongExcursion}.

The lower bound on $\brsigma_n^2$ follows from \Cref{PrVarLin}
while the upper bound follows from \eqref{dec}. Namely, while it is no longer true for Model B that
$r_m\in BV$ the fact
 that $P_{m} r_m$ is uniformly bounded in $BV$ suffices to get the exponentially decay
of $\mathrm{Cov}(r_{n_1}, r_{n_2})$. Indeed denoting 
$\DS \tilde r_k(x)=r_k(x)-\int_\T r_k(G_{k-1}\circ\dots\circ G_0 y) dy$
we get that for $n_1>n_2$ 
$$ \int_T \tilde r_{n_1}(G_{n_1-1}\circ\dots\circ G_0 x)
\tilde r_{n_2}(G_{n_2-1}\circ\dots\circ G_0 x) dx$$
$$=
\int_\mathbb{T} \tilde{r}_{n_1} P_{n_1-1} \cdots P_{n_2}\left(\tilde{r}_{n_2} \mathcal{P}^{n_2} 1\right) d x
$$
which is exponentially small since $P_{n_2}\left(\tilde{r}_{n_2} \mathcal{P}^{n_2} 1\right)\in BV_0.$

With the changes (1)--(3) discussed above the proof of Theorem \ref{main} for Model B proceeds by
the same arguments as for Model A.

}

\appendix    
\section{Sequential CLT for unbounded observables.}\label{app}


\begin{proof}[Proof of Theorem \ref{pr-1}]

Note that in general $\tilde{f}_n\notin BV$, but by \eqref{norm-bd} and \eqref{den-bdd}
$$
|P_k(f_{k-1}\mathcal{P}^{k-1}\mathbf{1})|_{BV}\leq D  |\mathcal{P}^{k-1}\mathbf{1}|_{BV}\leq MD.
$$
Hence, by \eqref{dec}
$$
|P_{n} P_{n-1}\dots P_{n-k} \left(\tilde{f}_{n-k-1} \mathcal{P}^{n-k-1} \mathbf{1}\right)|_{BV} \leq K\theta^{k}MD.
$$
Thus, we can consider the martingale co-boundary decomposition defined in \cite{CR07}
\begin{equation}\label{dc-0}
\mathbf{H}_{n}=\frac{1}{\mathcal{P}^{n} \mathbf{1}}\left[P_{n}\left(\tilde{f}_{n-1} \mathcal{P}^{n-1} \mathbf{1}\right)+P_{n} P_{n-1}\left(\tilde{f}_{n-2} \mathcal{P}^{n-2} \mathbf{1}\right)+\cdots+P_{n} P_{n-1} \ldots P_{1}\left(\tilde{f}_{0} \mathcal{P}^{0} \mathbf{1}\right)\right]
\end{equation}
and set 
\begin{equation}\label{dcth-1}
\begin{aligned} \psi_{n} &=\tilde{f}_{n}+\mathbf{H}_{n}-\mathbf{H}_{n+1}\circ G_{n+1}  \\ U_{n} &= \psi_{n}(G_{n} \circ \ldots \circ G_{1}). \end{aligned}
\end{equation}
Clearly
$$
|\mathbf{H}_{n}|_{BV} \leq \frac{KMD}{\sigma} \sum_{j=1}^n \theta^k
$$
{where $\sigma$ is from \eqref{Min}.}
Since $\mathcal{P}^n \mathbf{1}\in BV$, 
and by \eqref{Min} we also have $\frac{1}{\mathcal{P}^n \mathbf{1}}\in BV$, then $\mathbf{H}_n \in BV$, for all $n \geq 1$.
Moreover
$$
\sup_n|\mathbf{H}_{n}|_{BV} < \infty.
$$ 
Next note that
$$
|P_{n}\left(\psi_{n-1} \mathcal{P}^{n-1} \mathbf{1}\right)|_{BV} \leq |P_{n}\left(\tilde{f}_n \mathcal{P}^{n-1} \mathbf{1}\right)|_{BV}+ |P_n\left((\mathbf{H}_{n}- \mathbf{H}_{n+1}\circ G_{n+1}\right)\mathcal{P}^{n-1}\mathbf{1})|_{BV}<\infty.
$$
It then follows that
\begin{equation}\label{est-2}
\sup_{n \geq 1}|P_{n}\left(\psi_{n-1} \mathcal{P}^{n-1} \mathbf{1}\right)|_{BV}<\infty, \quad 
\sup_{n \geq 1}|P_{n}\left(\psi^2_{n-1} \mathcal{P}^{n-1} \mathbf{1}\right)|_{BV}<\infty.
\end{equation}
It is shown in \cite{CR07}, that $U_n$ is a sequence of reversed martingale and one has
$$
\sum_{k=0}^{n-1} \tilde{f}_{k}(G_1^k)=\sum_{k=0}^{n-1} U_{k}(G_1^k)+\mathbf{H}_{n}(G_1^k).
$$
We recall the following estimate from \cite{CR07}
\begin{equation}\label{var-bdd}
\Big| \|S_{n}\|_{2}-\|\sum_{k=0}^{n-1} U_{k}\|_{2}\Big| =\Big| \|S_{n}\|_{2}-\left(\sum_{k=0}^{n-1} 
\int U_{k}^{2}(x)  dx\right)^{\frac{1}{2}}\Big|
\end{equation}
$$
\leq \|S_n-\sum_{k=0}^{n-1} U_{k} \|_{2}\leq \sup_{n \geq 1}|\mathbf{H}_{n}|_{BV}<\infty. 
$$
Thus, $\sigma_n$ is unbounded if and only if 
$\DS \sum_{k=0}^{n-1} 
\int U_{k}^{2}(x)  dx$ is. The last expression is monotone. Hence, if $\sigma_n$ is unbounded, then it has to tend to infinity as $n \rightarrow \infty$.

We now define
$$
\bar\sigma_{n}^{2}=\sum_{k=0}^{n-1} \int U_{k}^{2}dx, \quad V_{n}=\sum_{k=0}^{n-1} \int\left[U_{k}^{2} | \mathcal{A}_{k+1}\right]dx.
$$
Following \cite{CR07}, Theorem 5.1, we need to check the following two conditions of Theorem 5.8 of \cite{CR07}, which is an extension of a result of B.M. Brown \cite{Br71}

\begin{enumerate}[(i)]
  
  \item   for every $\varepsilon>0$,\;
$\DS
\lim _{n \rightarrow+\infty} \bar\sigma_{n}^{-2} \sum_{k=0}^{n-1} \int\left[U_{k}^{2} 1_{\left\{\left|U_{k}\right|>\varepsilon \sigma_{n}\right\}}\right]=0.
$
\item the sequence $\left(\bar\sigma_{n}^{-2} V_{n}\right)_{n \geq 1}$ converges to $1$ in probability.
\end{enumerate}

For (i) we have from \cite{CR07}, page 115 and the estimate
$\|\mathcal{P}^{n}\mathbf{1}\|_\infty\leq M$, that for all $n \geq 1$
\begin{align*}
\int U_{k}^{2} 1_{\left\{\left|U_{k}\right|>\varepsilon \sigma_{n}\right\}}dx &=\int\left[\psi_{k}^{2}(G_1^n) 1_{[\varepsilon \sigma_{n}, \infty)}(\psi_{k}^{2}(G_1^n))\right]dx\\
& = \int\left[\psi_{k}^{2}(x) 1_{[\varepsilon \sigma_{n}, \infty)}(\psi_{k}^{2}(x))\mathcal{P}^{n} \mathbf{1}\right]dx
 \leq M \int\left[\psi_{k}^{2}(x) 1_{[\varepsilon \sigma_{n}, \infty)}(\psi_{k}^{2}(x))\right]dx.
\end{align*}
By \eqref{dcth-1}, \;
$\DS \psi_{k}^{2}(x)\leq 2(\tilde{f}_{k}^{2}(x))^2 + 8 \sup_{k \geq 1}|\mathbf{H}_{k}|_\infty^2$. Hence, if $\varepsilon \bar\sigma_n\leq \psi_{k}^{2}(x)$, then for $n$ large
$$
\frac{\varepsilon \bar\sigma_n}{8}\leq (\tilde{f}_{k}^{2}(x))^2.
$$ 	
Take $n$ so large that $\DS 4 \sup_{k \geq 1}|\mathbf{H}_{k}|_\infty^2 \leq \frac{\varepsilon \bar\sigma_{n}}{8}$.
Then
$$
M\int\left[\psi_{k}^{2}(x) 1_{[\varepsilon \bar\sigma_{n}, \infty)}(\psi_{k}^{2}(x))\right]dx\leq 2M \int\left[(\tilde{f}_{k}^{2}(x) + 4 \sup_{k \geq 1}|\mathbf{H}_{k}|_\infty^2) 1_{[\frac{\varepsilon \bar\sigma_{n}}{8} , \infty)}(\tilde{f}_{k}^{2}(x))\right]
$$
$$
\leq 4M \int\left[\tilde{f}_{k}^{2}(x) 1_{[\frac{\varepsilon \bar\sigma_{n}}{8} , \infty)}(\tilde{f}_{k}^{2}(x))\right].
$$
Hence (i) follows from \eqref{asm}. 

As for (ii) we have by \cite{CR07}, page 115
$$
\int\left[U_{k}^{2} | \mathcal{A}_{k+1}\right]=
\left(\frac{P_{k+1}\left(\psi^{2}_k \mathcal{P}^{k} 1\right)}{\mathcal{P}^{k+1} 1}\right)
 \circ
(G_{n} \circ \cdots \circ G_{k+1})
$$
By \eqref{Min} and \eqref{est-2} we have that
$$
\sup _{k}\left|\left(\frac{P_{k+1}\left(\psi^{2}_k \mathcal{P}^{k} 1\right)}{\mathcal{P}^{k+1} 1}\right)\right|_{BV}<\infty.
$$
{ Given these estimates,} the rest of the proof { of (ii)} 
is the same as { in the proof of \cite[Theorem 5.1]{CR07}.}
\end{proof}

{To verify the growth of the variance assumption in Theorem \ref{pr-1}, the following fact will be helpful.}

\begin{proposition}\label{A-2}
Let $\brG$ be such that $\brP$ satisfies \eqref{dec} and for its acim we almost surely have that $h(x)\geq c>0$. 
Assume that $P\bar\tau \in BV$. Let
$$
\overline{\mathbf{H}}=\frac{1}{h} \sum_{n=1}^{\infty} P^{n}\left(h\left({\bar\tau}
-\int {\bar\tau} h d x\right)\right),
$$
Then  $\overline{\mathbf{H}}\in BV$.
Moreover, {if 
\begin{equation}\label{gording}
\psi := \bar\tau - \int_\mathbb{T} \bar\tau hdx + \overline{\mathbf{H}} - \overline{\mathbf{H}}\circ \brG,
\end{equation}
does not vanish almost surely} then
$$\hat\sigma^2_n= \var\Big[\sum_{k=1}^n \bar\tau { \circ }\brG^k\Big] \geq Cn$$ for some $C>0.$
\end{proposition}
\begin{proof}
Consider the coboundary decomposition { from the proof of} Theorem \ref{pr-1}
We take $P_n=\brP$ and $f_n=\tau$ for all $n \geq 1$. Then
$$
\mathbf{H}_{n, P}=\frac{1}{\brP^{n} \mathbf{1}}\left[\brP\left(\tilde{\tau}\brP^{n-1} \mathbf{1}\right)+\brP^2\left(\tilde{\tau} \brP^{n-2} \mathbf{1}\right)+\cdots+\brP^n\left(\tilde{\tau} \brP \mathbf{1}\right)\right].
$$
and respectively
$$
\psi_n = \tilde{\tau} + \mathbf{H}_{n, P} - \mathbf{H}_{n, P}\circ G.
$$
We now show that
\begin{equation}\label{cbd-l}
\mathbf{H}_{n, P} \rightarrow_{L^2} \overline{\mathbf{H}}
\end{equation}
as $n \rightarrow \infty$. For this note that $\int_\mathbb{T} h\left(\tau-\int \tau h d x\right)dx=0$. Hence
$$
\Big|P^{n}\left(h\left(\tau-\int \tau h d x\right)\right)|_{BV} \leq K\theta^{n-1}\Big|P\left(h\left(\tau-\int \tau h d x\right)\right)\Big|_{BV}.
$$
Thus the general term in \eqref{cbd-l} decays exponentially fast. For small values of $n$ the convergence follows from the fact
$\brP^n \mathbf{1}\rightarrow_{L^2} h$, as $n \rightarrow \infty$, and the continuity of $\brP$ in $L^2$ metric. 
We also have that $\frac{1}{\brP^{n}\mathbf{1}}\rightarrow_{L^2}\frac{1}{h}$. Thus, \eqref{gording} follows.

Now assume that $\|\psi\|_{2}>0$.
Then by \eqref{var-bdd} 
$$
\Big| \hat\sigma_n-\left(\sum_{k=0}^{n-1} 
\int_\mathbb{T} U_{k}^{2}(x)  dx\right)^{\frac{1}{2}}\Big|=\Big| \hat\sigma_n-\left(
n \int_\mathbb{T}\psi^2 hdx\right)^{\frac{1}{2}}\Big|\leq |\overline{\mathbf{H}}|_{BV}<\infty
$$
completing the proof of the proposition.
\end{proof}

\newcommand{\Addresses}{{
  \bigskip
  \footnotesize

  \medskip

}}

\maketitle

\Addresses

\end{document}